\newtheorem{thm}{Theorem}[section]
\newtheorem{cor}[thm]{Corollary}
\newtheorem{prop}[thm]{Proposition}
\newtheorem{lem}[thm]{Lemma}
\theoremstyle{definition}
\numberwithin{equation}{section}
\def\eq#1{{\rm(\ref{#1})}}
\def\Eq#1#2{\ifthenelse{\equal{#1}{*}}
  {\begin{equation*}\begin{aligned}[]#2\end{aligned}\end{equation*}}
  {\begin{equation}\begin{aligned}[]\label{#1}#2\end{aligned}\end{equation}}}
\def\A{\mathscr{A}}
\def\B{\mathscr{B}}
\def\D{\mathscr{D}}
\def\G{\mathscr{G}}
\def\M{\mathscr{M}}
\def\P{\mathscr{P}}
\newcommand\R{\mathbb{R}}
\newcommand\N{\mathbb{N}}
\newcommand\Mb{\mathbf{M}}
\newcommand\Hc{\mathscr{H}}
\newcommand{\norm}[1]{\left\| #1 \right\| }
\newcommand{\floor}[1]{\left\lfloor #1 \right\rfloor}
\newcommand{\mgp}{M_\otimes}
\title[Characterization of the Hardy property of means]
{Characterization of the Hardy property of means \\ and the best Hardy constants}
\author{Zsolt P\'ales}
\address{Institute of Mathematics, University of Debrecen, Pf.\ 12, 4010 Debrecen, Hungary}
\email{pales@science.unideb.hu}
\author{Pawe\l{} Pasteczka}
\address{Institute of Mathematics, Pedagogical University of Cracow,  Podchor\k{a}\.{z}ych str 2, 30-084 Cracow, Poland}
\email{ppasteczka@up.krakow.pl}
\thanks{The research of the first author was supported by the Hungarian Scientific Research Fund (OTKA) 
Grant K-111651.}
\begin{document}

\begin{abstract} 
The aim of this paper is to characterize in broad classes of means the so-called Hardy means, i.e., those means 
$M\colon\bigcup_{n=1}^\infty \R_+^n\to\R_+$ that satisfy the inequality
\Eq{*}{
  \sum_{n=1}^\infty M(x_1,\dots,x_n) \le C\sum_{n=1}^\infty x_n
}
for all positive sequences $(x_n)$ with some finite positive constant $C$. One of the main results offers a 
characterization of Hardy means in the class of symmetric, increasing, Jensen concave and repetition invariant means 
and also a formula for the best constant $C$ satisfying the above inequality. 
\end{abstract}

\maketitle

\section{Introduction}

Hardy's celebrated inequality (cf.\ \cite{Har25a}, \cite{HarLitPol34}) states that, for $p>1$,
\Eq{1i}{
  \sum_{n=1}^\infty \Big(\frac{x_1+\cdots+x_n}{n}\Big)^p
  \le \Big(\frac{p}{p-1}\Big)^p\sum_{n=1}^\infty x_n^p,
}
for all nonnegative sequences $(x_n)$.

This inequality, in integral form was stated and proved in \cite{Har25a} but it was also pointed out that this discrete 
form follows from the integral version. Hardy's original motivation was to get a simple proof of Hilbert's celebrated 
inequality. About the enormous literature concerning the history, generalizations and extensions of this inequality,
we recommend four recent books \cite{KufMalPer07}, \cite{KufPer00}, \cite{MitPecFin91}, and \cite{OpiKuf90} for the 
interested readers.

In this paper, we follow the approach in generalizing Hardy's
inequality of the paper \cite{PalPer04}. The main idea is to rewrite \eq{1i} in terms of means.

First, replacing $x_n$ by $x_n^{1/p}$ and $p$ by $1/p$, we get that
\Eq{2i}{
  \sum_{n=1}^\infty \Big(\frac{x_1^p+\cdots+x_n^p}{n}\Big)^{1/p}
  \le \Bigl(\frac{1}{1-p}\Bigr)^{1/p}\sum_{n=1}^\infty x_n
}
for $0<p<1$. This inequality was also established for $p<0$ by Knopp \cite{Kno28}. Taking the limit $p\to0$, the 
so-called Carleman inequality (cf.\ \cite{Car32}) can also be derived:
\Eq{3i}{
  \sum_{n=1}^\infty \sqrt[n]{x_1\cdots x_n}
  \le e\sum_{n=1}^\infty x_n.
}
It is also important to note that the constants of the right hand sides of the above inequalities are the smallest 
possible. For further developments and historical remarks concerning inequality \eq{3i}, we refer to the paper 
Pečarić--Stolarsky \cite{PecSto01}.

Now define for $p\in\R$ the $p$th power (or H\"older) mean of the positive numbers $x_1,\dots,x_n$ by
\Eq{PM}{
  \P_p(x_1,\dots,x_n)
   :=\left\{\begin{array}{ll}
    \Big(\dfrac{x_1^p+\cdots+x_n^p}{n}\Big)^{\frac{1}{p}} 
      &\mbox{if }p\neq0, \\[3mm]
      \sqrt[n]{x_1\cdots x_n}\qquad
      &\mbox{if }p=0.
    \end{array}\right.
}
The power mean $\P_1$ is the arithmetic mean which will also be denoted by $\A$ in the sequel. 

Observe that all of the above inequalities are particular cases of the following one
\Eq{4i}{
  \sum_{n=1}^\infty M(x_1,\dots,x_n) \le C\sum_{n=1}^\infty x_n,
}
where $M$ is a mean on $\R_+$, that is, $M$ is a real valued function defined on the set 
$\bigcup_{n=1}^\infty \R_+^{n}$ such that, for all $n\in\N$, $x_1,\dots,x_n>0$,
\Eq{*}{
  \min(x_1,\dots,x_n)\le M(x_1,\dots,x_n)\le\max(x_1,\dots,x_n).
}
In the sequel, a mean $M$ will be called a \emph{Hardy mean} if there exists a positive real constant $C$ such 
that \eq{4i} holds for all positive sequences $x=(x_n)$. The smallest possible extended real value $C$ such that 
\eq{4i} is valid will be called the \emph{Hardy constant of $M$} and denoted by $\Hc_\infty(M)$. Due to the Hardy, 
Carleman, and Knopp inequalities, the $p$th power mean is a Hardy mean if $p<1$. One can easily see that the arithmetic 
mean is not a Hardy mean, therefore the following result holds.

\begin{thm}\label{thm:PM}
Let $p\in\R$. Then, the power mean $\P_p$ is a Hardy mean if and only if $p<1$. In addition, for $p<1$, 
\Eq{*}{
  \Hc_\infty(\P_p)
    =\left\{\begin{array}{ll}
      \big(1-p\big)^{-\frac1p} &\mbox{if }p\neq0, \\[1mm]
      e &\mbox{if }p=0.
    \end{array}\right.
}
\end{thm}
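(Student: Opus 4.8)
The plan is to prove the two directions separately, and then pin down the constant. For the "if" direction (showing $\mathscr P_p$ is a Hardy mean when $p<1$), I would simply quote the classical Hardy, Knopp, and Carleman inequalities \eq{2i} and \eq{3i} recorded in the introduction: these give directly that \eq{4i} holds with $C=(1-p)^{-1/p}$ for $0<p<1$, with $C=e$ for $p=0$, and (Knopp) with the same formula $C=(1-p)^{-1/p}$ for $p<0$. So for this half there is essentially nothing to do beyond citing \cite{Har25a}, \cite{Kno28}, \cite{Car32}. For the "only if" direction, I would prove that the arithmetic mean $\mathscr A=\mathscr P_1$, and more generally any power mean with $p\ge 1$, fails to be a Hardy mean; since $\mathscr P_p$ is nondecreasing in $p$, it suffices to handle $p=1$. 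Testing \eq{4i} on the truncated harmonic-type sequence $x_n=\tfrac1n$ for $n\le N$ and $x_n=0$ afterwards, one gets $\sum_{n=1}^N \mathscr A(x_1,\dots,x_n)=\sum_{n=1}^N \tfrac1n H_n$ where $H_n=\sum_{k=1}^n\tfrac1k$, which grows like $\tfrac12(\log N)^2$, while the right-hand side is $C H_N\sim C\log N$; letting $N\to\infty$ forces $C=\infty$, so $\mathscr A$ is not a Hardy mean, and neither is $\mathscr P_p$ for $p\ge1$.

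It remains to show the constants are sharp, i.e. that $\mathscr H_\infty(\mathscr P_p)$ equals the stated value and is not smaller. The standard route is to exhibit near-extremal sequences. For $0<p<1$, take $x_n=n^{-1/(1-p)}$ for $n=1,\dots,N$ (and $0$ afterwards) — equivalently, after the substitution turning \eq{2i} back into \eq{1i}, this is the classical choice $x_n=n^{-1/q}$ with $q=p/(p-1)$ that makes Hardy's inequality asymptotically tight. One computes that $\mathscr P_p(x_1,\dots,x_n)\sim (1-p)^{-1/p}\, x_n$ as $n\to\infty$ by estimating the partial sums $\sum_{k=1}^n k^{-p/(1-p)}$ via integral comparison, so the ratio of the two sides of \eq{4i} tends to $(1-p)^{-1/p}$ as $N\to\infty$. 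The case $p<0$ is analogous with the same extremal family (and the integral comparison still converges appropriately since $-p/(1-p)\in(-1,0)$... one must check the borderline carefully). For $p=0$ one either passes to the limit $p\to0^+$ in the formula $(1-p)^{-1/p}\to e$, or directly uses $x_n=1/n$ and the estimate $\sqrt[n]{x_1\cdots x_n}=\big(n!\big)^{-1/n}\sim e/n$ coming from Stirling's formula, again making the ratio tend to $e$.

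The main obstacle I anticipate is not any single deep step but rather the bookkeeping in the sharpness argument: one needs careful two-sided asymptotics for the partial power sums $\sum_{k=1}^n k^{-\alpha}$ (with $\alpha=p/(1-p)$, which lies in $(0,1)$, equals $0$, or is negative, depending on the sign of $p$), uniform enough to conclude that $\mathscr P_p(x_1,\dots,x_n)/x_n\to(1-p)^{-1/p}$ and hence $\big(\sum_{n\le N}\mathscr P_p(x_1,\dots,x_n)\big)\big/\big(\sum_{n\le N}x_n\big)\to(1-p)^{-1/p}$; the $p<0$ and $p=0$ endpoints need separate, slightly delicate treatment. Since this theorem is essentially the classical Hardy–Knopp–Carleman package repackaged in the language of means, I expect the authors' proof to be short, citing the literature for both the inequalities and the sharpness of the constants rather than reproving the asymptotics in full.
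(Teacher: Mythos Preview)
Your proposal is correct and matches the paper's treatment: Theorem~\ref{thm:PM} is presented there as a summary of the classical Hardy, Knopp, and Carleman inequalities together with their known sharp constants, with the failure at $p=1$ noted as an easy observation---exactly as you outline and as you anticipated in your final paragraph. The only thing to add is that the paper's later machinery (Corollary~\ref{cor:main_hom} combined with Lemma~\ref{lem:gini_harmonic} in the special case $q=0$) furnishes an alternative, self-contained derivation of the sharp constant via the formula $\Hc_\infty(\P_p)=\lim_{n\to\infty} n\cdot \P_p(1,\tfrac12,\dots,\tfrac1n)$, which amounts to the same harmonic test sequence you propose but packaged more systematically.
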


The notion of power means is generalized by the concept of \emph{quasi-arithmetic means} (cf.\ \cite{HarLitPol34}): 
If $I\subseteq\R$ is an interval and $f:I\to\R$ is a continuous strictly monotonic function then the quasi-arithmetic 
mean $\M_f:\bigcup_{n=1}^\infty\R_+^{n}\to\R$ is defined by
\Eq{QM}{
  \M_f(x_1,\dots,x_n)
   :=f^{-1}\bigg(\frac{f(x_1)+\cdots+f(x_n)}{n}\bigg),
   \qquad x_1,\dots,x_n\in I.
}
By taking $f$ as a power function or a logarithmic function on $I=\R_+$, the resulting quasi-arithmetic mean is a power 
mean. It is well-known that \emph{Hölder means are the only homogeneous quasi-arithmetic means} (cf.\ 
\cite{HarLitPol34}, \cite{Pal00a}, \cite{Pas15a}). 

The following result which completely characterizes the Hardy means among quasi-arithmetic means is due to Mulholland 
\cite{Mul32}.

\begin{thm}\label{thm:QM}
Let $f:\R_+\to\R$ be a continuous strictly monotonic function. Then, the quasi-arithmetic mean 
$\M_f$ is a Hardy mean if and only if there exist constants $p<1$ and $C>0$ such that, for all $n\in\N$ and 
$x_1,\dots,x_n>0$,
\Eq{*}{
 \M_f(x_1,\dots,x_n)\le C\P_p(x_1,\dots,x_n).
}
\end{thm}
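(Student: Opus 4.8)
Let me plan a proof of Theorem~\ref{thm:QM}.

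The plan is to treat the two implications separately; the forward implication (sufficiency of the condition) is essentially a one-line consequence of Theorem~\ref{thm:PM}, while the reverse implication carries all the content. For \emph{sufficiency}, suppose $p<1$ and $C>0$ satisfy $\M_f(x_1,\dots,x_n)\le C\,\P_p(x_1,\dots,x_n)$ for all $n\in\N$ and $x_1,\dots,x_n>0$. Since $p<1$, Theorem~\ref{thm:PM} says that $\P_p$ is a Hardy mean, so for every positive sequence $(x_n)$,
\Eq{*}{
  \sum_{n=1}^\infty \M_f(x_1,\dots,x_n)\le C\sum_{n=1}^\infty \P_p(x_1,\dots,x_n)\le C\,\Hc_\infty(\P_p)\sum_{n=1}^\infty x_n,
}
which shows that $\M_f$ is a Hardy mean (and, as a byproduct, that $\Hc_\infty(\M_f)\le C\,\Hc_\infty(\P_p)$).

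For \emph{necessity}, assume $\M_f$ is a Hardy mean, say with $\sum_{n=1}^\infty \M_f(x_1,\dots,x_n)\le C_0\sum_{n=1}^\infty x_n$ for all positive sequences. Replacing $f$ by $-f$ if necessary I may assume $f$ is strictly increasing, and composing $f$ with a suitable affine map (which does not alter $\M_f$) I normalize it conveniently, say $f(1)=0$. The task is to produce an exponent $p<1$ and a constant $C$ for which $\M_f\le C\,\P_p$ holds pointwise, using nothing but the summed inequality. I would do this in two stages. First, I would feed the Hardy inequality test sequences that are constant of some height $t>0$ on a long initial block and then decay so fast that the tail is negligible — and, where necessary, sequences that are constant at two different levels on two nested blocks. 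Because $\M_f(t,\dots,t)=t$, the block contributions are exactly $t$, so the inequality forces the ``mixed'' partial means (those combining the two levels, or a block with its tail) to fall short of the corresponding arithmetic means by a factor bounded away from $1$ \emph{uniformly in the scale $t$}. Sending $t\to0^+$ and then $t\to+\infty$ turns this into an asymptotic comparison of the generator $f$ with power functions at the two ends of $\R_+$: the local growth of $f$ near $0^+$ and near $+\infty$ is pinched so that a single power function $x\mapsto x^p$ dominates $f$ in the appropriate one-sided sense at both ends, and — crucially — the Hardy property, as opposed to the mere finiteness of some average, forces the common exponent to satisfy $p<1$. Second, with such a $p$ fixed, I would promote these endpoint estimates to the required pointwise bound $\M_f(x_1,\dots,x_n)\le C\,\P_p(x_1,\dots,x_n)$; this reduces, via the monotonicity of $f$ and $f^{-1}$ and the homogeneity of $\P_p$, to an elementary (if slightly case-dependent) comparison of $f$ with affine functions of $x^p$ — invoking, where convenient, the classical criterion that one quasi-arithmetic mean dominates another exactly when the conjugating function is convex.

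The hard part will be the first stage. A single instance of the Hardy inequality controls only a weighted average of partial means, so extracting from it a \emph{scale-invariant, pointwise} growth condition on $f$ requires a careful design of the test sequences — in particular the nested two-level construction and a delicate accounting of how a rapidly decaying tail perturbs the partial means — together with a clean passage to the limits $t\to0^+$ and $t\to+\infty$ that isolates the regular-variation index of $f$ at each endpoint and pins it strictly below $1$. Once the correct exponent is in hand, the second stage is comparatively routine, resting only on monotonicity, the homogeneity of power means (which is what lets the multiplicative constant $C$ be absorbed), and the standard convexity criterion for the comparability of quasi-arithmetic means.
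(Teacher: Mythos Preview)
The paper does not prove Theorem~\ref{thm:QM}; it is stated in the introduction as a known result due to Mulholland \cite{Mul32} and cited without proof. There is therefore no proof in the paper to compare your proposal against.

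On its own merits: your sufficiency argument is correct and complete. Your necessity sketch is in the right spirit --- Mulholland's original argument also proceeds by analyzing the asymptotic behaviour of the generator $f$ at $0^+$ and at $+\infty$ via suitably chosen test sequences --- but as written it leaves the central step unspecified. The passage from ``the mixed partial means fall short of the arithmetic means by a factor bounded away from $1$'' to ``a single exponent $p<1$ dominates at both ends'' is precisely the heart of the matter, and your outline does not indicate how the two endpoint exponents are to be reconciled, nor why the Hardy property (as opposed to some weaker averaged control) forces the strict inequality $p<1$ rather than merely $p\le 1$. You also invoke ``the classical criterion that one quasi-arithmetic mean dominates another exactly when the conjugating function is convex'', but the target inequality $\M_f\le C\,\P_p$ carries a multiplicative constant $C$ and is therefore not a pure comparison of quasi-arithmetic means; absorbing $C$ via the homogeneity of $\P_p$ still leaves you comparing $\M_f$ to $\P_p$ on a dilated argument, which is not the same problem. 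As it stands, the proposal is a plausible plan rather than a proof.
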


In 1938 Gini introduced another extension of power means: For $p,q\in\R$, the \emph{Gini mean} $\G_{p,q}$ of 
the variables $x_1,\dots,x_n>0$ is defined as follows:
\Eq{GM}{
  \G_{p,q}(x_1,\dots,x_n)
   :=\left\{\begin{array}{ll}
    \left(\dfrac{x_1^p+\cdots+x_n^p}
           {x_1^q+\cdots+x_n^q}\right)^{\frac{1}{p-q}} 
      &\mbox{if }p\neq q, \\[4mm]
     \exp\left(\dfrac{x_1^p\ln(x_1)+\cdots+x_n^p\ln(x_n)}
           {x_1^p+\cdots+x_n^p}\right) \quad
      &\mbox{if }p=q.
    \end{array}\right.
}
Clearly, in the particular case $q=0$, the mean $\G_{p,q}$ reduces to the $p$th Hölder mean $\P_p$. It is also obvious 
that $\G_{p,q}=\G_{q,p}$. A common generalization of quasi-arithmetic means and Gini means can be obtained in terms of 
two arbitrary real functions. These means were introduced by Bajraktarević \cite{Baj58}, \cite{Baj69} in 1958. Let 
$I\subseteq\R$ be an interval and let $f,g:I\to\R$ be continuous functions such that $g$ is positive and $f/g$ is 
strictly monotone. Define the \emph{Bajraktarević mean} $\B_{f,g}:\bigcup_{n=1}^\infty I^{n}\to\R$ by
\Eq{*}{
  \B_{f,g}(x_1,\dots,x_n)
   :=\Big(\frac{f}{g}\Big)^{-1}\bigg(\frac{f(x_1)+\cdots+f(x_n)}
                      {g(x_1)+\cdots+g(x_n)}\bigg),
                      \qquad x_1,\dots,x_n\in I.
}
One can check that $\B_{f,g}$ is a mean on $I$. In the particular case $g\equiv1$, the mean $\B_{f,g}$ reduces to 
$\M_f$, that is, the class of Bajraktarević means is more general than that of the quasi-arithmetic means. By taking 
power functions, we can see that the Gini means also belong to this class. It is a remarkable result of Aczél and 
Daróczy \cite{AczDar63c} that the homogeneous means among the Bajraktarević means defined on $I=\R_+$ are exactly the 
Gini means.

Finally, we recall the concept of the most general means considered in this paper, the notion of the \emph{deviation 
means} introduced by Daróczy \cite{Dar72b} in 1972. A function $E:I\times I\to\R$ is called a \emph{deviation 
function} on $I$ if $E(x,x)=0$ for all $x\in I$ and the function $y\mapsto E(x,y)$ is continuous and strictly decreasing
on $I$ for each fixed $x\in I$. The \emph{$E$-deviation mean} or \emph{Daróczy mean} of some values $x_1,\dots,x_n\in 
I$ is now defined as the unique solution $y$ of the equation 
\Eq{*}{
  E(x_1,y)+\cdots+E(x_n,y)=0
}
and is denoted by $\D_E(x_1,\dots,x_n)$. It is immediate to see that the arithmetic deviation $A(x,y)=x-y$ generates 
the arithmetic mean. More generally, if $E:I\times I\to\R$ is of the form $E(x,y):=f(x)-g(x)\big(\frac{f}{g}\big)(y)$ 
for some continuous function $f,g:I\to\R$ such that $g$ is positive and $f/g$ is strictly monotone then $\D_E=\B_{f,g}$.
Thus, Hölder means, quasi-arithmetic means, Gini means and Bajraktarević means are particular Daróczy means. The class 
of deviation means was slightly generalized to the class of quasi-deviation means and this class was completely 
characterized by Páles in \cite{Pal82a}.

The following result, which gives necessary and also sufficient conditions for the Hardy property of deviation means was 
established by Páles and Persson \cite{PalPer04} in 2004.

\begin{thm}
Let $E:\R_+\times\R_+\to\R$ be a deviation on $\R_+$. If $\D_E$ is a Hardy mean, then there exists a positive constant 
$C$ such that
\Eq{*}{
  \D_E(x_1,\dots,x_n)\le C\A(x_1,\dots,x_n)
}
holds for all $n\in\N$ and $x_1,\dots,x_n>0$ and there is no positive constant $C^*$ such that 
\Eq{*}{
   C^*\A(x_1,\dots,x_n)\le \D_E(x_1,\dots,x_n)
}
be valid on the same domain. Conversely, if 
\Eq{*}{
  \D_E(x_1,\dots,x_n)\le C\P_p(x_1,\dots,x_n)
}
is satisfied with a parameter $p<1$ and a positive constant $C$, then $\D_E$ is a Hardy mean.
\end{thm}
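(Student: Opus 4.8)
The statement is really three separate assertions: the upper estimate $\D_E\le C\A$, the impossibility of a reverse estimate $C^*\A\le\D_E$, and the sufficiency of a bound $\D_E\le C\P_p$ with $p<1$. I would treat them in turn, the last two being quick and the first being the substantive one (although it too admits a short proof).

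\emph{Sufficiency.} Assume $\D_E(x_1,\dots,x_n)\le C\P_p(x_1,\dots,x_n)$ for all $n$ and all positive $x_1,\dots,x_n$, with some $p<1$ and $C>0$. Summing this inequality over $n$ and using that $\P_p$ is a Hardy mean with $\Hc_\infty(\P_p)<\infty$ (Theorem~\ref{thm:PM}), we get, for every positive sequence $(x_n)$,
\Eq{*}{
 \sum_{n=1}^\infty\D_E(x_1,\dots,x_n)\le C\sum_{n=1}^\infty\P_p(x_1,\dots,x_n)\le C\,\Hc_\infty(\P_p)\sum_{n=1}^\infty x_n,
}
so $\D_E$ is a Hardy mean (with $\Hc_\infty(\D_E)\le C\,\Hc_\infty(\P_p)$).

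\emph{No reverse estimate.} Suppose, to the contrary, that $C^*\A\le\D_E$ held pointwise with some $C^*>0$. Then, for any positive sequence, $C^*\sum_n\A(x_1,\dots,x_n)\le\sum_n\D_E(x_1,\dots,x_n)\le\Hc_\infty(\D_E)\sum_n x_n$, i.e.\ $\A$ would be a Hardy mean (with constant $\Hc_\infty(\D_E)/C^*$), contradicting the fact noted above that $\A$ is not a Hardy mean.

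\emph{Upper estimate.} Fix $n$ and $x_1,\dots,x_n>0$. Since $\D_E$ is symmetric we may pass to the decreasing rearrangement and assume $x_1\ge\dots\ge x_n$, which changes neither $\D_E(x_1,\dots,x_n)$ nor $\A(x_1,\dots,x_n)$. The key elementary fact about deviation means is that appending a new entry not exceeding the current mean value cannot increase the mean: if $\nu=\D_E(z_1,\dots,z_m)$ and $z\le\nu$, then at $y=\nu$ the defining sum of the augmented tuple equals $\sum_iE(z_i,\nu)+E(z,\nu)=E(z,\nu)\le E(z,z)=0$ (as $E(z,\cdot)$ is decreasing and vanishes at $z$), hence $\D_E(z_1,\dots,z_m,z)\le\nu$ because $y\mapsto\sum E(\cdot,y)$ is strictly decreasing. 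Applying this successively to $x_{j+1},\dots,x_n$, each of which is at most $\min(x_1,\dots,x_j)$ and so at most the running mean, we obtain $\D_E(x_1,\dots,x_j)\ge\D_E(x_1,\dots,x_n)$ for every $j\le n$. Now feed the Hardy inequality the positive sequence $(x_1,\dots,x_n,\eta,\eta/2,\eta/4,\dots)$, where $\eta>0$: its sum is $\sum_{j=1}^n x_j+2\eta=n\A(x_1,\dots,x_n)+2\eta$, while, since all terms of the relevant series are positive, the sum of the means of its initial segments is at least $\sum_{j=1}^n\D_E(x_1,\dots,x_j)\ge n\D_E(x_1,\dots,x_n)$. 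Therefore $n\D_E(x_1,\dots,x_n)\le\Hc_\infty(\D_E)\bigl(n\A(x_1,\dots,x_n)+2\eta\bigr)$, and letting $\eta\to0^+$ gives $\D_E(x_1,\dots,x_n)\le\Hc_\infty(\D_E)\A(x_1,\dots,x_n)$, i.e.\ the claim with $C=\Hc_\infty(\D_E)$.

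I do not expect a serious obstacle; the only points needing care are the monotonicity property of $\D_E$ used above (immediate from the definition of a deviation and the strict monotonicity of $y\mapsto E(x,y)$) and the remark that a single summable tail turns a finite tuple into an admissible test sequence without affecting the limiting constant. Note that the upper-estimate argument uses only that $\D_E$ is symmetric together with this decomposability/monotonicity, so it carries over verbatim to Bajraktarević and quasi-deviation means; the genuine difficulty treated elsewhere in the paper is the converse problem of deciding, within the gap between the necessary condition $\D_E\le C\A$ and the sufficient condition $\D_E\le C\P_p$ ($p<1$), exactly which deviation means are Hardy.
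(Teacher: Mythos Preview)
Your argument is correct in all three parts. Note, however, that the present paper does not prove this theorem: it is quoted from P\'ales and Persson \cite{PalPer04} as a known result, so there is no in-paper proof to compare against.

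The sufficiency and the impossibility of a reverse estimate are indeed immediate. For the upper estimate, your key step---that appending to a tuple an entry not exceeding the current deviation mean cannot raise the mean---follows directly from the defining equation and the strict decrease of $y\mapsto E(x,y)$, exactly as you check; this is the same mechanism behind the $\min$-diminishing property recorded in Section~2. Passing to the decreasing rearrangement then gives $\D_E(x_1,\dots,x_j)\ge\D_E(x_1,\dots,x_n)$ for all $j\le n$, and your summable-tail trick (the same device the paper uses in the proof of Proposition~\ref{lem:1}) converts the Hardy inequality into the pointwise bound $\D_E\le\Hc_\infty(\D_E)\,\A$. No gaps.
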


As a corollary of the previous result, necessary and also sufficient conditions for the Hardy property were established 
in the class of Gini means by Páles and Persson \cite{PalPer04} in 2004.

\begin{thm}\label{thm:PalPer04}
Let $p,q\in\R$. If $\G_{p,q}$ is a Hardy mean, then 
\Eq{*}{
  \min(p,q)\le0 \qquad\mbox{and}\qquad \max(p,q)\le1.
}
Conversely, if 
\Eq{*}{
  \min(p,q)\le0 \qquad\mbox{and}\qquad \max(p,q)<1,
}
then $\G_{p,q}$ is a Hardy mean.
\end{thm}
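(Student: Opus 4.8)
The plan is to obtain both implications from results already available, using throughout the symmetry $\G_{p,q}=\G_{q,p}$ to assume $q\le p$, so that $\min(p,q)=q$ and $\max(p,q)=p$. The sufficiency will rest on a pointwise comparison of $\G_{p,q}$ with the power mean $\P_p$, and the necessity on the Páles--Persson theorem on deviation means stated above (every Gini mean being a Bajraktarević, hence a deviation, mean) together with two explicit test families.

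For the sufficiency, assume $q\le0$ and $p<1$. The heart of the matter is
\Eq{*}{
 \G_{p,q}(x_1,\dots,x_n)\le\P_p(x_1,\dots,x_n)\qquad\text{for all }n\in\N,\ x_1,\dots,x_n>0.
}
If $0\in\{p,q\}$, then $\G_{p,q}$ is itself a power mean and this is immediate from the monotonicity of power means in the exponent; otherwise $q<0$, and one normalizes by homogeneity so that $\P_p(x_1,\dots,x_n)=1$, i.e.\ $x_1^p+\cdots+x_n^p=n$, notes that the power mean inequality gives $\P_q(x_1,\dots,x_n)\le1$, which (since $q<0$) amounts to $x_1^q+\cdots+x_n^q\ge n$, and concludes $\G_{p,q}(x_1,\dots,x_n)=\bigl((x_1^p+\cdots+x_n^p)/(x_1^q+\cdots+x_n^q)\bigr)^{1/(p-q)}\le1$; the coincident case $q=p<0$ is handled instead by Jensen's inequality for the convex function $u\mapsto u\ln u$. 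Granted this comparison, since $p<1$ Theorem~\ref{thm:PM} says $\P_p$ is a Hardy mean, and summing the inequality over $n$ shows that $\G_{p,q}$ is a Hardy mean, with $\Hc_\infty(\G_{p,q})\le\Hc_\infty(\P_p)$.

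For the necessity, suppose $\G_{p,q}$ is a Hardy mean. By the Páles--Persson theorem there is a $C>0$ with $\G_{p,q}\le C\A$ on the whole domain. If $q>0$, then on the vectors $(1,\dots,1,t)\in\R_+^n$ (with $n-1$ ones) one has $\G_{p,q}(1,\dots,1,t)\to t$ and $\A(1,\dots,1,t)\sim t/n$ as $t\to\infty$, so the ratio $\G_{p,q}/\A$ approaches $n$ for every $n$ and is therefore unbounded, contradicting $\G_{p,q}\le C\A$; thus $q\le0$. Assume now $p>1$ for contradiction. If $q=0$, then $\G_{p,0}=\P_p$, which is not a Hardy mean by Theorem~\ref{thm:PM}. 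If $q<0$, pick $\alpha$ with $1<\alpha<1+\frac{p-1}{-q}$ (possible since $p>1$) and set $x_n:=n^{-\alpha}$, so $\sum_n x_n<\infty$; from $x_1^p+\cdots+x_n^p\ge1$ and $x_1^q+\cdots+x_n^q=\sum_{i=1}^n i^{-\alpha q}\le n^{1-\alpha q}$ one gets $\G_{p,q}(x_1,\dots,x_n)\ge n^{-(1-\alpha q)/(p-q)}$, and the choice of $\alpha$ forces the exponent $(1-\alpha q)/(p-q)$ below $1$, so $\sum_n\G_{p,q}(x_1,\dots,x_n)=\infty$, contradicting the Hardy property. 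Hence $p\le1$ as well.

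The asymptotics for the two test families are routine. The point I expect to be the real obstacle is that, when $\max(p,q)>1$, the failure of the Hardy property cannot be deduced from the inequality $\G_{p,q}\le C\A$ alone --- that inequality can persist for $p>1$ --- so it must be witnessed by an explicitly divergent series, and the exponent $\alpha$ has to be tuned so that $\sum_n x_n$ still converges while $\sum_n\G_{p,q}(x_1,\dots,x_n)$ diverges. A lesser care point is the coincident exponent case $q=p<0$ in the comparison inequality.
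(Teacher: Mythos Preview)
The paper does not contain its own proof of this theorem: Theorem~\ref{thm:PalPer04} is quoted from \cite{PalPer04} as background, introduced only with the sentence ``As a corollary of the previous result, necessary and also sufficient conditions for the Hardy property were established in the class of Gini means by Páles and Persson \cite{PalPer04} in 2004.'' So there is nothing in the present paper to compare your argument against directly.

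That said, your proof is correct and fits naturally into the framework the paper sets up. The sufficiency part is precisely an application of the converse clause of the Páles--Persson deviation-mean theorem (Theorem stated just before Theorem~\ref{thm:PalPer04}): you establish $\G_{p,q}\le \P_p$ for $q\le 0$, and then $p<1$ makes $\P_p$ a Hardy mean. Your verification of the comparison inequality (normalizing $\P_p=1$, using the power-mean inequality for $\P_q$, and treating the diagonal $p=q<0$ via Jensen for $u\ln u$) is clean and complete. For the necessity, your use of the first necessary condition $\G_{p,q}\le C\A$ together with the test vectors $(1,\dots,1,t)$ correctly rules out $\min(p,q)>0$. You are also right that the bound $\G_{p,q}\le C\A$ alone cannot exclude $\max(p,q)>1$ when $\min(p,q)<0$, so an explicit witness is needed; your choice $x_n=n^{-\alpha}$ with $1<\alpha<1+(p-1)/(-q)$ does the job, since then $(1-\alpha q)/(p-q)<1$ and $\sum_n \G_{p,q}(x_1,\dots,x_n)$ diverges while $\sum_n x_n$ converges. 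The separate handling of $q=0$ via $\G_{p,0}=\P_p$ is the right bookkeeping. Nothing is missing.
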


It has been an open problem since 2004 whether the second condition was a necessary and sufficient condition for the 
Hardy property and also the best Hardy constant was to be determined.

The necessary and sufficient condition for the Hardy property of Gini means was finally found by Pasteczka 
\cite{Pas15c} in 2015. The key was the following general necessary condition for the Hardy property.

\begin{lem}\label{lem:Pas15}
Assume that $M\colon\bigcup_{n=1}^\infty \R_+^n\to\R_+$ is a Hardy mean. Then, for all positive non-$\ell_1$ sequences
$(x_n)$,
\Eq{*}{
   \liminf_{n\to\infty} x_n^{-1}M(x_1,\dots,x_n)<\infty.
}
\end{lem}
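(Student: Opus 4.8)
The plan is to argue by contraposition: I will assume that there exists a positive sequence $(x_n)\notin\ell_1$ along which $\liminf_{n\to\infty} x_n^{-1}M(x_1,\dots,x_n)=\infty$, and then construct a positive sequence witnessing the failure of the Hardy inequality \eq{4i} for every constant $C$. The hypothesis $\liminf = \infty$ means that for every $K>0$ there is an $N$ so that $M(x_1,\dots,x_n)\ge K x_n$ for all $n\ge N$; in particular one can pass to a tail and assume $M(x_1,\dots,x_n)\ge K x_n$ for all $n$, at the cost of discarding finitely many terms (which only changes both sides of \eq{4i} by a fixed finite amount and so does not affect whether $M$ is a Hardy mean). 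The key structural fact I would extract is that summing these inequalities gives $\sum_{n=1}^\infty M(x_1,\dots,x_n)\ge K\sum_{n=1}^\infty x_n=\infty$ since $(x_n)\notin\ell_1$, but that alone is not enough — the right-hand side $\sum x_n$ is also infinite, so one cannot immediately read off a contradiction with \eq{4i}.

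The real device, then, is a \emph{truncation} argument. First I would fix $K$ and (after the tail reduction) assume $M(x_1,\dots,x_n)\ge K x_n$ for all $n$. For each $m$, consider the finite input $(x_1,\dots,x_m)$ and extend it by zeros — more precisely, since $M$ is defined on positive tuples, extend by a sequence that tends to $0$ fast, say consider $y^{(m)}=(x_1,\dots,x_m,\varepsilon,\varepsilon^2,\varepsilon^3,\dots)$ for small $\varepsilon$, or simply work with the sequence that is $x_n$ for $n\le m$ and then decays geometrically. Applying \eq{4i} to $y^{(m)}$ and using the lower bound on the first $m$ coordinates yields
\Eq{*}{
  K\sum_{n=1}^m x_n \le \sum_{n=1}^m M(y^{(m)}_1,\dots,y^{(m)}_n) \le \sum_{n=1}^\infty M(y^{(m)}) \le C\sum_{n=1}^\infty y^{(m)}_n \le C\Big(\sum_{n=1}^m x_n + \frac{\varepsilon}{1-\varepsilon}\Big).
}
Letting $\varepsilon\to 0$ (for fixed $m$) gives $K\sum_{n=1}^m x_n \le C\sum_{n=1}^m x_n$, hence $K\le C$ once $\sum_{n=1}^m x_n>0$. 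Since $K$ was arbitrary, $C$ cannot be finite, contradicting the assumption that $M$ is a Hardy mean.

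The main obstacle I anticipate is the interchange of the tail: when I replace the true tail $(x_{m+1},x_{m+2},\dots)$ by a small decaying tail, I need to control $\sum_{n>m} M(y^{(m)}_1,\dots,y^{(m)}_n)$ from above, and the mean property only gives $M(y^{(m)}_1,\dots,y^{(m)}_n)\le\max(x_1,\dots,x_m,\varepsilon,\dots)=\max(x_1,\dots,x_m)$ for each such term, which is not summable. This is exactly why one must instead bound this tail via the Hardy inequality \eq{4i} applied to the \emph{whole} sequence $y^{(m)}$ rather than term-by-term: the left side of \eq{4i} already contains $\sum_{n>m} M(y^{(m)})$ as part of a convergent total bounded by $C\sum y^{(m)}_n$, so no separate estimate of the tail is needed — one simply drops those (nonnegative) terms. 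Getting the bookkeeping right here — discarding finitely many initial terms, choosing the decaying tail, and taking the two limits $\varepsilon\to0$ then $K\to\infty$ in the correct order — is the only delicate point; the rest is the mean inequality and the definition of the Hardy constant.
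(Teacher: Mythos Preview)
Your overall strategy is sound and close to the paper's own argument (which appears as Theorem~\ref{thm:lower_estim_Hc}, in the sharper form $\liminf_n x_n^{-1}M(x_1,\dots,x_n)\le\Hc_\infty(M)$). There is, however, a genuine gap at the ``tail reduction'' step. You claim that one may discard finitely many initial terms and thereby assume $M(x_1,\dots,x_n)\ge Kx_n$ for \emph{all} $n$, justifying this by saying both sides of \eq{4i} change only by a finite amount. But $M(x_1,\dots,x_n)$ depends on the entire initial segment $x_1,\dots,x_n$: if you replace $(x_n)$ by its tail $(x_{N+1},x_{N+2},\dots)$ and relabel, the hypothesis $M(x_1,\dots,x_{N+n})\ge Kx_{N+n}$ says nothing about $M(x_{N+1},\dots,x_{N+n})$, so you no longer have the lower bound needed for the first inequality in your displayed chain. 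A symptom that something has gone wrong is that, as written, your argument never uses the non-$\ell_1$ hypothesis after the reduction, yet the conclusion of the lemma is false for general $\ell_1$ sequences.

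The repair is easy and is precisely what the paper does. Do not pass to a tail; instead keep the index $N$ beyond which $M(x_1,\dots,x_n)\ge Kx_n$, sum only those terms, and carry the finite correction. Your chain becomes
\Eq{*}{
K\sum_{n=N}^{m} x_n \le \sum_{n=N}^{m} M(x_1,\dots,x_n)
\le \sum_{n=1}^{\infty} M(y^{(m)}_1,\dots,y^{(m)}_n)
\le C\Big(\sum_{n=1}^{m} x_n + \frac{\varepsilon}{1-\varepsilon}\Big),
}
and after letting $\varepsilon\to0$ you obtain $K\le C\cdot\dfrac{\sum_{n=1}^{m}x_n}{\sum_{n=N}^{m}x_n}$. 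Now the non-$\ell_1$ hypothesis is exactly what drives the ratio to $1$ as $m\to\infty$, giving $K\le C$. The paper packages the same idea using the finite Hardy constants $\Hc_{n}(M)\le\Hc_\infty(M)$ from Proposition~\ref{lem:1} in place of your geometric-tail device, and chooses $n_1$ large enough that $\sum_{n\le n_0}x_n\le\varepsilon\sum_{n_0<n\le n_1}x_n$ --- morally the same limiting step.
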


Applying this necessary condition in the class of Gini means with the harmonic sequence $x_n:=\frac1n$, Pasteczka 
\cite{Pas15c} obtained the following characterization of the Hardy property for Gini means. 

\begin{thm}\label{thm:Pas15}
Let $p,q\in\R$. Then $\G_{p,q}$ is a Hardy mean if and only if 
\Eq{*}{
  \min(p,q)\le0 \qquad\mbox{and}\qquad \max(p,q)<1.
}
\end{thm}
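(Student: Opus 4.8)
The plan is to combine both halves of Theorem~\ref{thm:PalPer04} with the general necessary condition of Lemma~\ref{lem:Pas15}. The sufficiency direction needs no new argument: if $\min(p,q)\le0$ and $\max(p,q)<1$, then $\G_{p,q}$ is a Hardy mean by the converse part of Theorem~\ref{thm:PalPer04}. For the necessity, suppose $\G_{p,q}$ is a Hardy mean. Theorem~\ref{thm:PalPer04} already yields $\min(p,q)\le0$ and $\max(p,q)\le1$, so the only remaining task is to exclude the boundary case $\max(p,q)=1$. Using the symmetry $\G_{p,q}=\G_{q,p}$, I would assume without loss of generality that $q=1$; then $\min(p,1)\le0$ forces $p\le0$, and in particular $p\ne q$, so $\G_{p,1}$ is given by the first case in \eqref{GM}.

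The next step is to feed the harmonic sequence $x_n:=\frac1n$ into Lemma~\ref{lem:Pas15}. This sequence is positive and non-$\ell_1$ because $\sum\frac1n=\infty$, so if $\G_{p,1}$ were a Hardy mean the lemma would force
\[
  \liminf_{n\to\infty} n\,\G_{p,1}\Bigl(1,\tfrac12,\dots,\tfrac1n\Bigr)<\infty .
\]
I would then show that this expression in fact tends to $+\infty$. Writing $H_n:=\sum_{k=1}^n\frac1k$, one has $H_n\sim\ln n$, and by comparing $\sum_{k=1}^n k^{-p}$ with $\int_1^n t^{-p}\,dt$ one gets $\sum_{k=1}^n k^{-p}\sim\frac{n^{1-p}}{1-p}$, a relation valid for every $p<1$ (and an exact identity when $p=0$). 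Substituting these into the Gini mean and using $\frac{1}{p-1}=-\frac{1}{1-p}<0$, which holds since $p\le0<1$, gives
\[
  \G_{p,1}\Bigl(1,\tfrac12,\dots,\tfrac1n\Bigr)
  =\Bigl(\frac{\sum_{k=1}^n k^{-p}}{H_n}\Bigr)^{\frac{1}{p-1}}
  \sim\Bigl(\frac{n^{1-p}}{(1-p)H_n}\Bigr)^{-\frac{1}{1-p}}
  =\frac{\bigl((1-p)H_n\bigr)^{\frac{1}{1-p}}}{n} .
\]

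Consequently $n\,\G_{p,1}(1,\tfrac12,\dots,\tfrac1n)\sim\bigl((1-p)H_n\bigr)^{\frac{1}{1-p}}\to\infty$, so the liminf above equals $+\infty$, contradicting Lemma~\ref{lem:Pas15}. Hence $\G_{p,1}$, and by symmetry $\G_{1,p}$, is not a Hardy mean for any $p\le0$; together with the first step this shows that every Hardy Gini mean satisfies $\max(p,q)<1$, which completes the characterization. I expect the only delicate bookkeeping to be the sign of the exponent $\frac{1}{p-1}$ and the uniform validity of the asymptotics $\sum_{k=1}^n k^{-p}\sim\frac{n^{1-p}}{1-p}$ over the range $p\le0$; there is no deeper obstacle, since the substantive analytic work — sufficiency and the general necessary condition of Lemma~\ref{lem:Pas15} — is already supplied by the cited results.
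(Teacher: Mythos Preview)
Your argument is correct and follows exactly the route the paper attributes to Pasteczka \cite{Pas15c}: sufficiency comes directly from Theorem~\ref{thm:PalPer04}, and the boundary case $\max(p,q)=1$ is ruled out by applying Lemma~\ref{lem:Pas15} to the harmonic sequence $x_n=\tfrac1n$. The asymptotic computation $n\,\G_{p,1}(1,\tfrac12,\dots,\tfrac1n)\sim\bigl((1-p)H_n\bigr)^{1/(1-p)}\to\infty$ is accurate for every $p\le0$, and the sign bookkeeping on the exponent $\tfrac{1}{p-1}$ is handled correctly; the paper itself does not spell out this calculation (it only records the result and cites \cite{Pas15c}), so your write-up supplies precisely the missing details.
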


There was no progress, however, in establishing the Hardy constant of the Gini means. There was only an upper estimate 
obtained by Páles and Persson in \cite{PalPer04}.

Motivated by all these preliminaries, the purpose of this paper is twofold: \\
--- To find (in terms of easy-to-check properties) a large subclass of Hardy means. \\
--- To obtain a formula for the Hardy constant in that subclass of means.

\section{Means and their basic properties}

For investigating the Hardy property of means, we recall several relevant notions. Let $I\subseteq\R$ be an 
interval and let $M \colon\bigcup_{n=1}^{\infty} I^n \to I$ be an arbitrary mean.

We say that $M$ is \emph{symmetric}, \emph{(strictly) increasing}, and \emph{Jensen convex (concave)} if, for all 
$n\in\N$, the $n$-variable restriction $M|_{I^n}$ is a symmetric, (strictly) increasing in each of its variables, and 
Jensen convex (concave) on $I^n$, respectively. If $I=\R_+$, we can analogously define the notion of homogeneity of $M$.

The mean $M$ is called \emph{repetition invariant} if, for all $n,m\in\N$
and $(x_1,\dots,x_n)\in I^n$, the following identity is satisfied
\Eq{*}{
  M(\underbrace{x_1,\dots,x_1}_{m\text{-times}},\dots,\underbrace{x_n,\dots,x_n}_{m\text{-times}})
   =M(x_1,\dots,x_n).
}

The mean $M$ is \emph{strict} if for any $n\geq2$
and any non-nonstant vector $(x_1,\dots,x_n)\in I^n$,
\Eq{*}{
\min(x_1,\dots,x_n)< M(x_1,\dots,x_n)< \max(x_1,\dots,x_n).
}

The mean $M$ is said to be \emph{$\min$-diminishing} if, for any $n\geq2$
and any non-nonstant vector $(x_1,\dots,x_n)\in I^n$,
\Eq{*}{
   M(x_1,\dots,x_n,\min(x_1,\dots,x_n)) < M(x_1,\dots,x_n).
}
It is easy to check that quasi-arithmetic means are symmetric, strictly increasing, repetition invariant, strict and 
$\min$-diminishing. More generally, deviation means are symmetric, repetition invariant, strict and $\min$-diminishing 
(cf.\ \cite{Pal82a}). The increasingness of a deviation mean $\D_E$ is equivalent to the increasingness of the 
deviation $E$ in its first variable. The Jensen concavity/convexity of quasi-arithmetic and also of deviation means can 
be characterized by the concavity/convexity conditions on the generating functions. All these characterizations are 
consequences of the general results obtained in a series of papers by Losonczi 
\cite{Los70a,Los71a,Los71b,Los71c,Los73a,Los77} (for Bajraktarević means) and by Daróczy 
\cite{DarLos70,Dar71b,Dar72b,DarPal82,DarPal83} and Páles 
\cite{Pal82b,Pal83a,Pal83b,Pal83c,Pal84a,Pal85a,Pal87d,Pal88a,Pal88d,Pal88e} (for (quasi-)deviation means).

\subsection{Kedlaya means}
The notion of a Kedlaya mean that we introduce below turns out to be indispensable for the investigation of 
Hardy means. A mean $M \colon \bigcup_{n=1}^{\infty} I^n \to I$ is called a \emph{Kedlaya mean} if, for all $n\in\N$ and
$(x_1,\ldots,x_n)\in I^n$,
\Eq{KI}{
\frac{M(x_1)+M(x_1,x_2)+\cdots+M(x_1,\ldots,x_n)}n 
  \le M\left(x_1,\frac{x_1+x_2}2,\ldots,\frac{x_1+\cdots+x_n}n \right).
}
The motivation for the above terminology comes from the papers \cite{Ked94,Ked99} by Kedlaya, where he proved that the 
geometric mean satisfies the inequality \eq{KI}, i.e., it is a Kedlaya mean. The next result provides a sufficient 
condition in order that a mean be a Kedlaya mean. 

\begin{thm}\label{thm:Kedlaya_type}
Every symmetric, Jensen concave and repetition invariant mean is a Kedlaya mean.
\end{thm}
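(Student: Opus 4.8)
The plan is to exploit the three hypotheses in the following order: symmetry to reorganize the arguments on the right-hand side of \eq{KI}, Jensen concavity to pull the averaging outward, and repetition invariance to recognize the averages as means of the original data. Denote $s_k:=x_1+\cdots+x_k$ and $a_k:=s_k/k=\A(x_1,\dots,x_k)$, so the right-hand side of \eq{KI} is $M(a_1,\dots,a_n)$. For each $k\in\{1,\dots,n\}$ I want to produce a vector (of some common length $N$) whose entries are a multiset drawn from $\{a_1,\dots,a_n\}$ (with multiplicities) such that $M$ evaluated on that vector equals $M(x_1,\dots,x_k)$, and such that, when I average these $n$ vectors coordinatewise, I land (as a multiset) on a repetition of $(a_1,\dots,a_n)$. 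Then symmetry lets me ignore the order of coordinates, Jensen concavity gives
\Eq{*}{
  \frac1n\sum_{k=1}^n M(\text{vector}_k)\le M\Big(\frac1n\sum_{k=1}^n\text{vector}_k\Big),
}
and repetition invariance identifies the right-hand side with $M(a_1,\dots,a_n)$, which is exactly \eq{KI}.

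The key combinatorial observation is the telescoping identity $s_k=a_k+(a_k-a_{k-1})\cdot 1+\cdots$; more usefully, one checks by induction that
\Eq{*}{
  (x_1,\dots,x_k)\ \text{and}\ \big(\underbrace{a_1,\dots}_{?},\dots\big)
}
are related through a doubly-stochastic-type averaging. Concretely, the clean route is: for a fixed $k$, consider the length-$k!$ vector obtained by listing, over all permutations $\sigma$ of $\{1,\dots,k\}$, the value $\A(x_{\sigma(1)},\dots,x_{\sigma(j)})$ is not quite it either — the cleanest version uses that the multiset $\{x_{\sigma(1)}, \tfrac{x_{\sigma(1)}+x_{\sigma(2)}}2,\dots\}$ averaged over $\sigma$ reproduces, by symmetry of the arithmetic mean, a uniform repetition pattern. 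I would therefore fix a large common length $N=n!$ (or $\mathrm{lcm}(1,\dots,n)$, chosen so all the fractions below are integers), and for each $k$ build the vector $v^{(k)}$ of length $N$ consisting of $M(x_1,\dots,x_k)$ repeated $N$ times — by repetition invariance this single-value vector has $M$-value $M(x_1,\dots,x_k)$. That is too crude, though: its coordinatewise average over $k$ is the constant $\frac1n\sum_k M(x_1,\dots,x_k)$, not $(a_1,\dots,a_n)$. So instead $v^{(k)}$ must be built from the $x_i$'s: take $v^{(k)}$ to be $(x_1,\dots,x_k)$ each entry repeated $N/k$ times (length $N$, $M$-value $M(x_1,\dots,x_k)$ by repetition invariance), and then observe that
\Eq{*}{
  \frac1n\sum_{k=1}^n v^{(k)}\ \text{(as a multiset of size }N\text{)}
}
consists, for each $i$, of $x_i$ appearing with total weight $\frac1n\sum_{k\ge i} \frac Nk$; after reindexing this is precisely a repetition of the multiset $\{a_1,\dots,a_n\}$ because $a_j=\frac1j\sum_{i\le j}x_i$ and $\sum_{j\ge i}\frac1{jn}$ telescopes correctly. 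Symmetry plus Jensen concavity plus repetition invariance then close the argument as above.

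The main obstacle is the bookkeeping in the previous paragraph: verifying that the coordinatewise average of the vectors $v^{(1)},\dots,v^{(n)}$ equals, \emph{as an unordered tuple}, an exact integer-multiplicity repetition of $(a_1,\dots,a_n)$. This requires choosing $N$ divisible by each $k\le n$ so that all block sizes $N/k$ are integers, and then checking the identity $\sum_{k=j}^n \frac{N/k}{n}$ matches the multiplicity with which $a_j$ should appear — which is where the structure $a_j=\A(x_1,\dots,x_j)$ (a symmetric mean, so order within each block is irrelevant) is used twice: once to know $v^{(k)}$'s value only depends on the multiset, and once to recognize the averaged multiset. Everything else (applying Jensen concavity to a coordinatewise convex combination of $n$ vectors, applying repetition invariance to strip the common multiplicity) is routine. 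If the exact-multiplicity matching turns out to be awkward, an alternative is to prove \eq{KI} first for $n=2$ by a direct two-line computation and then induct on $n$, feeding the $(n-1)$-variable Kedlaya inequality back in; but I expect the global averaging argument to be the cleaner writeup.
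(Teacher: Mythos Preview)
Your high-level plan --- build length-$N$ vectors $v^{(k)}$ with $M(v^{(k)})=M(x_1,\dots,x_k)$ by repetition invariance, then apply Jensen concavity to their coordinatewise average, and use symmetry plus repetition invariance to recognize the result as $M(a_1,\dots,a_n)$ --- is exactly the skeleton of the paper's proof. The gap is in the step you flag as ``bookkeeping'': the coordinatewise average of your \emph{$n$} vectors $v^{(1)},\dots,v^{(n)}$ can \emph{never} be a permutation of a repetition of $(a_1,\dots,a_n)$ unless $j\mid n$ for every $j\le n$. Indeed, each coordinate of $\tfrac1n\sum_{k}v^{(k)}$ has the form $\tfrac1n\sum_i c_i x_i$ with nonnegative integers $c_i$ summing to $n$, whereas $a_j=\tfrac1j\sum_{i\le j}x_i$ requires $c_i=n/j$ for $i\le j$. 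Already for $n=3$, $j=2$ this fails. The ``multiset weight'' computation you sketch ($x_i$ appearing with total count $\sum_{k\ge i}N/k$ over all $v^{(k)}$) is a statement about the union of the multisets, not about any coordinatewise average, so it does not feed into Jensen concavity.

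The paper resolves this by averaging not $n$ but $n!$ vectors: it builds an $n!\times n!$ matrix whose columns, taken as multisets, are each one of your $v^{(k)}$ (with $(n-1)!$ columns for each $k$), and whose \emph{rows}, taken as multisets, have the same structure by a symmetry $a_k(i,j)=a_k(j,i)$ of Kedlaya's coefficients
\Eq{*}{
a_k(i,j)=(n-1)!\,\binom{n-i}{j-k}\binom{i-1}{k-1}\Big/\binom{n-1}{j-1}.
}
Hence every row has arithmetic mean equal to some $a_j$, and the vector of row means is exactly $(n-1)!$ copies of $(a_1,\dots,a_n)$. The nontrivial combinatorics is precisely finding a common refinement so that rows and columns \emph{simultaneously} have the right block structure; this is what the coefficients $a_k(i,j)$ accomplish and what your $n$-vector average cannot. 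Your fallback induction on $n$ is also not straightforward: passing from the $(n-1)$-variable Kedlaya inequality to the $n$-variable one would need something like $\tfrac{n-1}{n}M(a_1,\dots,a_{n-1})+\tfrac1n M(x_1,\dots,x_n)\le M(a_1,\dots,a_n)$, which the hypotheses do not obviously give.
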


\begin{proof}
Let $M \colon \bigcup_{n=1}^{\infty} I^n \to I$ be a symmetric, Jensen concave and repetition invariant mean.
Fix $n \in \N$ and $(x_1,\ldots,x_n) \in I^n$. Adopting Kedlaya's original proof, for $(i,j,k)\in\{1,\dots,n\}^3$, 
we define
\Eq{*}{
a_k(i,j):=(n-1)! \cdot \binom{n-i}{j-k} \binom{i-1}{k-1} \bigg/ \binom{n-1}{j-1}
  =\frac{(n-i)!(n-j)!(i-1)!(j-1)!}{(n-i-j+k)!(i-k)!(j-k)!(k-1)!}.
}
To provide the corectness of this definition we assume that $m!=\infty$ for negative integers $m$ (it is a natural 
extension of gamma function). Then, according to \cite{Ked94}, we have the following properties:
\begin{enumerate}[\quad(1)\quad]
\item $a_k(i,j) \ge 0$ for all $i,\,j,\,k$;
\item $a_k(i,j) \in \N$ for all $i,\,j,\,k$;
\item $a_k(i,j) = 0$ for $k> \min(i,j)$;
\item $a_k(i,j) = a_k(j,i)$ for all $i,\,j,\,k$;
\item $\sum_{k=1}^{n}a_k(i,j)=(n-1)!$ for all $i,\,j$;
\item $\sum_{i=1}^{n}a_k(i,j)=\begin{cases} n!/j & \text{ for }k \le j, \\ 0 & \text{ for } k>j. \end{cases}$
\end{enumerate}

Let us construct a matrix $A$ of size $n! \times n!$ divided into $n^2$ blocks $(A_{i,j})_{i,j \in\{1,\dots, n\}}$ of 
size $(n-1)! \times (n-1)!$.

The first row of each block $A_{i,j}$ contains the number $k$ exactly $a_k(i,j)$ times for $k \in \{1,\ldots,n\}$; this 
could be done by (5). The subsequent rows are all cyclic permutations of the first one. In this way each row and each 
column of $A_{i,j}$ contains the number $k$ exactly $a_k(i,j)$ times.

Now, let $c_p(k)$ denote the occurrence of the number $k$ appearing in the $p$th row of $A$. Then, by (4),
$c_p(k)$ is equal to the number of occurrences of $k$ in the $p$th column of $A$.

We are going to calculate $c_p(k)$. The $p$th row has a nonempty intersection with the block $A_{i,j}$ if
\Eq{*}{
  i=\floor{\frac{p-1}{(n-1)!}}+1=:b(p).
}
Whence, applying property (6), we get
\Eq{*}{
c_p(k)=\sum_{i=1}^n a_k(i,b(p))
  =\begin{cases} n!/b(p) & \text{ for }k \le b(p), \\[2mm] 0 & \text{ for } k>b(p).\end{cases}
}
Now, let us consider the matrix $A'$ obtained from $A$ by replacing $k \mapsto x_k$ for $k \in \{1,\ldots,n\}$.
We will calculate the mean value of the elements of $A'$ in two different ways. First, we calculate the mean $M$ of 
each column of $A'$. By the Jensen concavity of $M$, the arithmetic mean of the results so obtained does not exceed the 
result of calculating arithmetic mean of each row of $A'$ and then taking the $M$ mean of the resulting vector of 
length 
$n!$. Whence, using the symmetry and the repetition invariance of $M$, we obtain
\Eq{*}{
\frac{1}{n!} \Big( (n-1)! M(x_1)&+(n-1)! M(x_1,x_2)+\cdots+(n-1)! M(x_1,x_2,\ldots,x_n)\Big)\\
&\le M\Big(x_1,\frac{x_1+x_2}{2},\ldots,\frac{x_1+x_2+\cdots+x_n}{n} \Big)
}
which simplifies to the inequality \eq{KI} to be proved.
\end{proof}

\begin{cor}\label{cor:kedlaya_type}
If, in addition to the assumptions of Theorem~\ref{thm:Kedlaya_type}, $M$ is also increasing and $I=\R_{+}$, then
\Eq{*}{
M(x_1)&+M(x_1,x_2)+\cdots+M(x_1,\ldots,x_n) \\
&\le n \cdot M\left(x_1+\cdots+x_n,\frac{x_1+\cdots+x_n}2,\ldots,\frac{x_1+\cdots+x_n}n \right).
}

\end{cor}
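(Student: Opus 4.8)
The plan is to derive this immediately from Theorem~\ref{thm:Kedlaya_type} combined with the monotonicity of $M$. Since $M$ is symmetric, Jensen concave and repetition invariant, Theorem~\ref{thm:Kedlaya_type} guarantees that $M$ is a Kedlaya mean, so for the given vector $(x_1,\ldots,x_n)\in\R_+^n$ the inequality \eq{KI} holds:
\Eq{*}{
\frac{M(x_1)+M(x_1,x_2)+\cdots+M(x_1,\ldots,x_n)}n
  \le M\left(x_1,\frac{x_1+x_2}2,\ldots,\frac{x_1+\cdots+x_n}n \right).
}
It then remains only to bound the right-hand side from above by the announced quantity.

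For this step I would use positivity of the entries: for every $k\in\{1,\dots,n\}$ we have $x_1+\cdots+x_k\le x_1+\cdots+x_n$, hence $\frac{x_1+\cdots+x_k}{k}\le\frac{x_1+\cdots+x_n}{k}$. Thus the argument vector on the right-hand side of \eq{KI} is dominated, coordinate by coordinate, by $\bigl(x_1+\cdots+x_n,\tfrac{x_1+\cdots+x_n}{2},\dots,\tfrac{x_1+\cdots+x_n}{n}\bigr)$, and the increasingness of $M$ in each of its variables (applied one coordinate at a time) gives
\Eq{*}{
M\left(x_1,\frac{x_1+x_2}2,\ldots,\frac{x_1+\cdots+x_n}n \right)
\le M\left(x_1+\cdots+x_n,\frac{x_1+\cdots+x_n}2,\ldots,\frac{x_1+\cdots+x_n}n \right).
}
Chaining this with the previous display and multiplying through by $n$ yields the assertion.

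The argument is essentially free of obstacles, since the substance is already contained in Theorem~\ref{thm:Kedlaya_type}. The only two mild points to keep in mind are that the elementary estimate $x_1+\cdots+x_k\le x_1+\cdots+x_n$ uses in an essential way that the domain is $I=\R_+$ (it would fail on a general interval, which is why this statement is phrased only for $\R_+$), and that the coordinatewise-monotonicity hypothesis is exactly what licenses raising all arguments of $M$ simultaneously.
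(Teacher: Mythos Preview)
Your proof is correct and is precisely the argument the paper has in mind: the corollary is stated without proof immediately after Theorem~\ref{thm:Kedlaya_type}, the intended derivation being exactly the combination of \eq{KI} with the coordinatewise monotonicity of $M$ and the elementary bound $\frac{x_1+\cdots+x_k}{k}\le\frac{x_1+\cdots+x_n}{k}$ valid on $\R_+$.
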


\subsection{Gaussian product}
The Gaussian product of means is a broad extension of Gauss' idea of the arithmetic-geometric mean. 
In 1800 (this year is due to \cite{ToaToa05}) he proposed the following two-term recursion:
\Eq{*}{
x_{n+1}=\frac{x_n+y_n}2, \quad y_{n+1}=\sqrt{x_ny_n}, \qquad n=0,1,\dots,
}
where $x_0$ and $y_0$ are positive numbers. Gauss \cite[p.\ 370]{Gau18} proved that both $(x_n)_{n=1}^\infty$ and 
$(y_n)_{n=1}^\infty$ converge to a common limit, which is called arithmetic-geometric mean of the initial values $x_0$ 
and $y_0$. J.~M.~Borwein and P.~B.~Borwein \cite{BorBor87} extended some earlier ideas \cite{FosPhi84a,Leh71,Sch82} and 
generalized this iteration to a vector of continuous, strict means of an arbitrary length.
For several recent results about Gaussian product of means see the papers by Baják and Páles 
\cite{BajPal09b,BajPal09a,BajPal10,BajPal13}, by Daróczy and Páles \cite{Dar05a,DarPal02c,DarPal03a}, 
by Głazowska \cite{Gla11b,Gla11a}, by Matkowski \cite{Mat99b,Mat02b,Mat05,Mat13}, and by Matkowski and Páles 
\cite{MatPal15}.

Given $N \in \N$ and a vector $(M_1,\dots,M_N)$ of means defined on a common interval $I$ and having values in $I$ 
(i.e. $M_i\colon \bigcup_{n=1}^\infty I^n \to I$ for every $i \in\{1,\dots,N\}$), let us introduce the mapping 
$\mathbf{M} \colon \bigcup_{n=1}^\infty I^n \to I^N$ by
\Eq{*}{
\mathbf{M}(v):=(M_1(v),M_2(v),\dots,M_N(v)),\qquad v \in \bigcup_{n=1}^\infty I^n.
}
Whenever, for every $i \in \{1,\dots,N\}$ and every $v \in\bigcup_{n=1}^\infty I^n$, the limit of 
iterations $\lim_{k \to \infty} [\mathbf{M}^{k}(v)]_i$ exists and does not depend on $i$, then the value of this limit 
will be called the \emph{Gaussian product} of $(M_1,\dots,M_N)$ evaluated at $v$. We will denote this limit by 
$\mgp(v)$. It is well-known that the Gaussian product can equivalently be defined as a unique function satisfying the 
following two properties:
\begin{enumerate}[\quad(i)\quad]
 \item $\mgp \circ \mathbf{M}(v) = \mgp(v)$ for all $v \in \bigcup_{n=1}^\infty I^n$,
 \item $\min(v) \le \mgp(v) \le \max(v)$ for all $v \in \bigcup_{n=1}^\infty I^n$.
\end{enumerate}

Frequently, whenever each of the means $M_i$, $i \in \{1,\dots,N\}$ has a certain property, then $\mgp$ inherits this 
property. The lemma below (in view of Theorem~\ref{thm:Kedlaya_type}) is its very useful exemplification.

\begin{lem}
\label{lem:Gaussian_concave}
Let $I$ be an interval, $N \in \N$, and let $(M_1,\dots,M_N) \colon \bigcup_{n=1}^{\infty} I^n\to I^N$. If, for each 
$i\in\{1,\dots,N\}$, $M_i$ is symmetric/homogeneous/repetition invariant/increasing and Jensen concave/, then so is 
their Gaussian product $\mgp$.
\end{lem}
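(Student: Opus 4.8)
The plan is to exploit the description of the Gaussian product as an iterated limit: whenever $\mgp$ is defined (as the statement of the lemma presupposes), one has $\mgp(v)=\lim_{k\to\infty}[\mathbf{M}^k(v)]_i$ for every $v\in\bigcup_{n=1}^\infty I^n$ and every $i\in\{1,\dots,N\}$. Consequently, it is enough to prove that for each fixed $k\in\N$ the coordinate maps $v\mapsto[\mathbf{M}^k(v)]_i$ inherit the property in question, and then to pass to the limit $k\to\infty$. This last step is legitimate because all the properties occurring in the statement are expressed by \emph{non-strict} identities or inequalities --- $M(v_\sigma)=M(v)$ for permutations $\sigma$, $M(\lambda v)=\lambda M(v)$ for $\lambda>0$, $v\le w\Rightarrow M(v)\le M(w)$, and $M(\tfrac{v+w}{2})\ge\tfrac12(M(v)+M(w))$ --- and such relations are stable under pointwise convergence. (That $\mgp$ is a mean, i.e.\ obeys the $\min$--$\max$ bounds, is part of its definition, so no separate check is needed.)

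For symmetry, homogeneity, and repetition invariance the verification is immediate, since $\mathbf{M}$ already carries the relevant structure. If each $M_j$ is symmetric, then for every permutation $\sigma$ of $\{1,\dots,n\}$ and $v_\sigma:=(v_{\sigma(1)},\dots,v_{\sigma(n)})$ we get $\mathbf{M}(v_\sigma)=\mathbf{M}(v)$, hence $\mathbf{M}^k(v_\sigma)=\mathbf{M}^k(v)$ for all $k\ge1$, and letting $k\to\infty$ yields $\mgp(v_\sigma)=\mgp(v)$. The same one-step collapse handles repetition invariance: writing $v^{(m)}$ for the tuple obtained from $v$ by repeating each entry $m$ times, we have $\mathbf{M}(v^{(m)})=\mathbf{M}(v)$. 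For homogeneity (with $I=\R_+$), the identity $\mathbf{M}(\lambda v)=\lambda\mathbf{M}(v)$ propagates through the iteration by the trivial induction $\mathbf{M}^{k+1}(\lambda v)=\mathbf{M}(\mathbf{M}^k(\lambda v))=\mathbf{M}(\lambda\mathbf{M}^k(v))=\lambda\mathbf{M}^{k+1}(v)$, and again the limit inherits the property.

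The substantive case is ``increasing and Jensen concave''. Here I would prove by induction on $k$ the \emph{combined} claim that, for every $n\in\N$ and every $i\in\{1,\dots,N\}$, the restriction of $v\mapsto[\mathbf{M}^k(v)]_i$ to $I^n$ is at once increasing in each variable and Jensen concave; the case $k=1$ is exactly the hypothesis on $M_1,\dots,M_N$. For the inductive step, write $\mathbf{M}^{k+1}(v)=\mathbf{M}(\mathbf{M}^k(v))$ with $\mathbf{M}^k(v)\in I^N$. Monotonicity of $[\mathbf{M}^{k+1}]_i$ is clear (a composition of coordinatewise increasing maps), and Jensen concavity follows from the chain
\Eq{*}{
\Big[\mathbf{M}^{k+1}\Big(\tfrac{v+w}{2}\Big)\Big]_i
&=M_i\Big(\mathbf{M}^k\Big(\tfrac{v+w}{2}\Big)\Big)
\ge M_i\Big(\tfrac{\mathbf{M}^k(v)+\mathbf{M}^k(w)}{2}\Big)\\
&\ge \tfrac12\Big(M_i\big(\mathbf{M}^k(v)\big)+M_i\big(\mathbf{M}^k(w)\big)\Big)
=\tfrac12\Big([\mathbf{M}^{k+1}(v)]_i+[\mathbf{M}^{k+1}(w)]_i\Big),
}
valid for all $v,w\in I^n$: the first inequality combines the coordinatewise Jensen concavity of $\mathbf{M}^k$ (inductive hypothesis) with the increasingness of $M_i$ on $I^N$, and the second is the Jensen concavity of $M_i$ on $I^N$. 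Letting $k\to\infty$ in the two relations for $[\mathbf{M}^k]_i$ then shows that $\mgp$ is increasing and Jensen concave.

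I expect the only real difficulty to lie in setting up this last induction correctly. The crucial point is that the inductive hypothesis must carry monotonicity \emph{and} Jensen concavity of every coordinate of $\mathbf{M}^k$ simultaneously: Jensen concavity by itself is not preserved under composition, and it is precisely the monotonicity of the outer map $M_i$ that allows the concavity inequality for the inner map $\mathbf{M}^k$ to be fed inside. Once this is arranged, the remaining verifications are routine, and the whole argument fits the single template ``the property is transmitted along the iteration and is stable under pointwise limits''.
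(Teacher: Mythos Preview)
Your proof is correct and follows essentially the same approach as the paper: both arguments show that each property is transmitted through the iterates $[\mathbf{M}^k]_i$ and survives the pointwise limit, with the substantive inductive computation for ``increasing and Jensen concave'' being identical (the paper phrases it via the three sequences $x^{(k)},y^{(k)},m^{(k)}$, which amounts to your inequality $\mathbf{M}^k(\tfrac{v+w}{2})\ge\tfrac12(\mathbf{M}^k(v)+\mathbf{M}^k(w))$ written componentwise). Your explicit remark that monotonicity must be carried along in the induction to feed the inner concavity inequality through the outer map is a point the paper uses silently.
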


\begin{proof}
The first four properties are naturally inherited by all of the functions $[\Mb^k]_i$, for $k \in \N$, 
$i\in\{1,\ldots,N\}$ and, finally, by their pointwise limit. The verification of the statement about the Jensen 
concavity is just a little bit more sophisticated. In fact, the idea presented below could also be adapted to the 
remaining properties.

Assume that $M_1,\dots,M_N$ are increasing and Jensen concave. We will prove that $\mgp$ is Jensen concave. Let 
$x^{(0)}$, $y^{(0)}$ be the equidimensional vectors and $m^{(0)}=\tfrac12 (x^{(0)}+y^{(0)})$. Let
\Eq{*}{
x^{(k+1)}=\Mb(x^{(k)}),\quad y^{(k+1)}=\Mb(y^{(k)}),\quad m^{(k+1)}=\Mb(m^{(k)}),\qquad k \in \N.
}
We are going to prove that 
\Eq{eq:Gauss_ind}{
[m^{(k)}]_i \ge \tfrac12 [x^{(k)}+y^{(k)}]_i\quad\text{ for any }i\in \{ 1,\ldots,N\}\text{ and }k \in 
\N. 
}
Obviously, this holds for $n=0$. Let us assume that \eqref{eq:Gauss_ind} holds for some $n \in \N$ and any $i$.
Then, by the increasingness and Jensen concavity of $M_i$,
\Eq{*}{
[m^{(k+1)}]_i&=M_i(m^{(k)}) \ge M_i \left( \tfrac12 (x^{(k)}+y^{(k)}) \right) \ge \tfrac12  \left( 
M_i(x^{(k)})+M_i(y^{(k)}) \right) \\
&= \tfrac12 \left( [x^{(k+1)}]_i+[y^{(k+1)}]_i \right) = \tfrac12 [x^{(k+1)}+y^{(k+1)}]_i.
}
Upon taking the limit $k \to \infty$, one gets 
\Eq{*}{
\mgp\left(\frac{x^{(0)}+y^{(0)}}2\right)=\mgp(m^{(0)}) \ge \tfrac12 \left(\mgp(x^{(0)})+\mgp(y^{(0)})\right),\
}
which proves that $\mgp$ is Jensen concave, indeed.
\end{proof}

\section{Main Results}

In the sequel, let $I\subseteq\R$ be a nondegenerate interval such that $\inf I=0$. We will denote by 
$\ell_1(I)$ the collection of all sequences $x=(x_n)_{n=1}^\infty$ such that, for all $n\in\N$, $x_n\in I$ and 
$\|x\|_1:=\sum_{n=1}^\infty x_n$ is convergent, i.e., $x\in\ell_1$.

For a given mean $M \colon \bigcup_{n=1}^{\infty} I^n \to I$ let $\Hc_\infty(M)$ be the smallest nonnegative
extended real number, called the \emph{Hardy constant of $M$}, such that
\Eq{HD}{
\sum_{n=1}^\infty M(x_1,\dots,x_n)\le \Hc_\infty(M)\sum_{n=1}^\infty x_n,\qquad 
(x_n)_{n=1}^\infty\in\ell_1(I).
}
If $\Hc_\infty(M)$ is finite, then we say that $M$ is a \emph{Hardy mean}. Given also $n\in\N$, we define $\Hc_n(M)$ to 
be the smallest nonnegative number such that
\Eq{HnD}{
M(x_1)+\dots+M(x_1,\dots,x_n) \le \Hc_n(M)(x_1+\cdots+x_n),\qquad (x_1,\dots,x_n) \in I^n.
}
Due to the mean value property of $M$, for $n\in\N$, we easily obtain that
$1\leq\Hc_n(M)\leq n$. The sequence $\big(\Hc_n(M)\big)_{n=1}^\infty$ will be called the \emph{Hardy sequence of $M$}.

Several estimates of the Hardy sequences for power means were given during the years. For example Kaluza and Szeg\H{o} 
\cite{KalSze27} proved $\Hc_n(\P_p) \le \tfrac{1}{n(\exp(1/n)-1)} \cdot \Hc_\infty(\P_p)$ for $p \in [0,1)$ and $n \in 
\N$. Moreover it is known \cite[p.267]{HarLitPol34} that $\Hc_n(\P_0) \le (1+\tfrac1n)^n$ for all $n \in \N$.

The basic properties of the Hardy sequence are established in the following
%\section{Auxiliary results}
\begin{prop}
\label{lem:1}
For every mean $M \colon \bigcup_{n=1}^{\infty} I^n \to I$, its Hardy sequence is nondecreasing and
\Eq{Hlim}{
  \lim_{n\to\infty}\Hc_n(M)=\Hc_\infty(M).
}
\end{prop}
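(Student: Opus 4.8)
The plan is to establish two facts: first that the Hardy sequence $\bigl(\Hc_n(M)\bigr)_{n=1}^\infty$ is nondecreasing, and second that its limit equals $\Hc_\infty(M)$; the latter will follow by proving the two inequalities $\limsup_{n\to\infty}\Hc_n(M)\le\Hc_\infty(M)$ and $\Hc_\infty(M)\le\lim_{n\to\infty}\Hc_n(M)$ separately.

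First I would prove monotonicity. Fix $n\in\N$ and a vector $(x_1,\dots,x_n)\in I^n$. The idea is to extend it to a vector of length $n+1$ in a way that does not change the relevant sums too much. The natural move (using that $\inf I=0$) is to take $x_{n+1}$ to be very small, i.e.\ to pass to the limit $x_{n+1}\to 0^+$. For each fixed such small $x_{n+1}>0$ the definition of $\Hc_{n+1}(M)$ gives
\Eq{*}{
  M(x_1)+\dots+M(x_1,\dots,x_n)+M(x_1,\dots,x_n,x_{n+1})\le \Hc_{n+1}(M)(x_1+\dots+x_n+x_{n+1}).
}
Here the subtle point — and the step I expect to need the most care — is that $M$ need not be continuous, so one cannot simply let $x_{n+1}\to0$ inside $M(x_1,\dots,x_n,x_{n+1})$. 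However, the mean value property guarantees $M(x_1,\dots,x_n,x_{n+1})\ge\min(x_1,\dots,x_n,x_{n+1})>0$, so the extra term on the left is nonnegative and the extra term $x_{n+1}$ on the right tends to $0$; hence for every $\varepsilon>0$ we get, choosing $x_{n+1}$ small enough,
\Eq{*}{
  M(x_1)+\dots+M(x_1,\dots,x_n)\le \Hc_{n+1}(M)(x_1+\dots+x_n)+\varepsilon,
}
and letting $\varepsilon\to0$ and then taking the supremum over $(x_1,\dots,x_n)\in I^n$ yields $\Hc_n(M)\le\Hc_{n+1}(M)$. (If one prefers to avoid the $\varepsilon$, one can instead drop the last term on the left outright, getting $M(x_1)+\dots+M(x_1,\dots,x_n)\le\Hc_{n+1}(M)(x_1+\dots+x_n+x_{n+1})$ for all small $x_{n+1}>0$, and then let $x_{n+1}\downarrow0$ on the right only.) In particular the limit $L:=\lim_{n\to\infty}\Hc_n(M)$ exists in $[1,\infty]$.

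For \eq{Hlim}, the inequality $\Hc_n(M)\le\Hc_\infty(M)$ for every $n$ is immediate: given any $(x_1,\dots,x_n)\in I^n$, append zeros — more precisely, pad with a tail that is summable and vanishing, e.g.\ $x_{n+k}\to0^+$ with $\sum_k x_{n+k}$ as small as we like (possible since $\inf I=0$) — apply \eq{HD} to the resulting $\ell_1(I)$ sequence, discard the nonnegative tail terms $M(x_1,\dots,x_n,\dots)$ on the left, and let the tail shrink to $0$; this gives $\Hc_n(M)\le\Hc_\infty(M)$, hence $L\le\Hc_\infty(M)$. For the reverse inequality, suppose $L<\infty$ (otherwise there is nothing to prove) and take any $x=(x_n)_{n=1}^\infty\in\ell_1(I)$. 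For every $N\in\N$, applying \eq{HnD} with $n=N$ to $(x_1,\dots,x_N)$ and using $\Hc_N(M)\le L$ gives
\Eq{*}{
  \sum_{n=1}^N M(x_1,\dots,x_n)\le \Hc_N(M)\sum_{n=1}^N x_n\le L\sum_{n=1}^\infty x_n.
}
The partial sums on the left are nondecreasing (each term $M(x_1,\dots,x_n)\ge\min(x_1,\dots,x_n)>0$) and bounded by $L\|x\|_1$, so the series $\sum_n M(x_1,\dots,x_n)$ converges and $\sum_{n=1}^\infty M(x_1,\dots,x_n)\le L\sum_{n=1}^\infty x_n$. Since this holds for every $x\in\ell_1(I)$, by definition $\Hc_\infty(M)\le L$. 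Combining the two inequalities gives $\Hc_\infty(M)=L=\lim_{n\to\infty}\Hc_n(M)$, completing the proof.
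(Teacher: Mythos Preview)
Your proof is correct and follows essentially the same approach as the paper: for monotonicity you append a small extra coordinate and drop the (nonnegative) last $M$-term on the left before letting it tend to zero; for $\Hc_n(M)\le\Hc_\infty(M)$ you pad with an $\ell_1$ tail of arbitrarily small total mass (the paper uses the geometric tail $\varepsilon/2,\varepsilon/4,\dots$) and discard the extra left-hand terms; and for the reverse inequality you bound finite partial sums by $\Hc_N(M)\|x\|_1$ and pass to the limit. The only cosmetic difference is that the paper does the monotonicity step directly via your parenthetical ``drop the last term and let the extra coordinate tend to $0$ on the right only'' route, without the preliminary $\varepsilon$-version.
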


\begin{proof} To verify the nondecreasingness of the Hardy sequence of $M$, let $(x_1,\dots,x_n) \in I^n$
and $\varepsilon\in I$ be arbitrary. 
Applying inequality \eq{HnD} to the sequence $(x_1,\ldots,x_n,\varepsilon)\in I^{n+1},$ we obtain
\Eq{*}{
M(x_1)+\cdots+M(x_1,\ldots,x_n)
&\le M(x_1)+\cdots+M(x_1,\ldots,x_n)+M(x_1,\ldots,x_n,\varepsilon)\\
&\le \Hc_{n+1}(M) (x_1+\cdots+x_n+\varepsilon).
}
Upon taking the limit $\varepsilon\to0$, it follows that
\Eq{*}{
M(x_1)+\cdots+M(x_1,\ldots,x_n)\leq \Hc_{n+1}(M) (x_1+\cdots+x_n)
}
for all $(x_1,\dots,x_n) \in I^n$. Hence $\Hc_{n}(M)\leq\Hc_{n+1}(M)$.

To prove \eq{Hlim}, we will show first that $\Hc_{n}(M)\leq\Hc_\infty(M)$ for all $n\in\N$.
If $\Hc_\infty(M)=\infty$ then this inequality is obvious, hence we may assume that $M$ is a Hardy mean.
Fix $n \in \N$ and $(x_1,\ldots,x_n)\in I^n$ and choose $\varepsilon\in I$ arbitrarily. Applying  
\eqref{HD} to the sequence 
$(x_1,\ldots,x_n,\tfrac{\varepsilon}{2},\tfrac{\varepsilon}{4},\tfrac{\varepsilon}{8},\ldots)\in\ell_1(I)$, one 
gets
\Eq{*}{
M(x_1)&+\cdots+M(x_1,\ldots,x_n) \\
&\le M(x_1)+\cdots+M(x_1,\ldots,x_n)+M(x_1,\ldots,x_n,\tfrac{\varepsilon}{2})
+M(x_1,\ldots,x_n,\tfrac{\varepsilon}{2},\tfrac{\varepsilon}{4})+\cdots \\
&\le \Hc_\infty(M)(x_1+\cdots+x_n+\tfrac{\varepsilon}{2}+\tfrac{\varepsilon}{4}+\cdots)\\
&= \Hc_\infty(M)(x_1+\cdots+x_n+\varepsilon).
}
Upon passing the limit $\varepsilon \to 0$, we get
\Eq{*}{
M(x_1)+\cdots+M(x_1,\ldots,x_n) \le \Hc_\infty(M)(x_1+\cdots+x_n),
}
which implies $\Hc_n(M)\leq\Hc_\infty(M)$. Using this inequality, we have also proved that in \eq{Hlim} $\leq$ 
holds instead of equality.

To prove the reversed inequality in \eq{Hlim}, let $(x_n)_{n=1}^\infty\in\ell_1(I)$ be arbitrary.
Then, for all $n\leq k$, we have that
\Eq{*}{
M(x_1)+\cdots+M(x_1,\ldots,x_n)\leq \Hc_n(M) (x_1+\cdots+x_n)\leq \Hc_k(M) (x_1+\cdots+x_n)
}
Now taking the limit as $k \to \infty$, we obtain that
\Eq{*}{
M(x_1)+\cdots+M(x_1,\ldots,x_n) \le \lim_{k \to \infty} \Hc_k(M) \cdot (x_1+\cdots+x_n)
}
holds for all $n\in\N$. Finally taking the limit as $n\to\infty$, it follows that $M$ satisfies 
\Eq{*}{
\sum_{n=1}^\infty M(x_1,\dots,x_n)\le \lim_{k \to \infty} \Hc_k(M) \sum_{n=1}^\infty x_n
}
which yields that the reversed inequality in \eq{Hlim} is also true.
\end{proof}

In what follows, we show that the inequality \eq{HD} is strict in a broad class of means. 

\begin{prop}
Let $I\subseteq \R_{+}$ and $M \colon \bigcup_{n=1}^{\infty} I^n \to I$.
If $M$ is a $\min$-diminishing, increasing and repetition invariant Hardy mean, then 
\Eq{*}{
\sum_{n=1}^\infty M(x_1,\dots,x_n)<\Hc_\infty(M) \sum_{n=1}^\infty x_n ,\qquad (x_n)_{n=1}^\infty \in 
\ell_1(I).
}
\end{prop}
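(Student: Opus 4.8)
I would argue by contradiction. Since \eqref{HD} always holds with ``$\le$'', if the strict inequality failed there would be some $x=(x_n)_{n=1}^\infty\in\ell_1(I)$, $x\neq 0$, with
$\sum_{n=1}^\infty M(x_1,\dots,x_n)=C\|x\|_1$, where $C:=\Hc_\infty(M)<\infty$ because $M$ is a Hardy mean. In particular the series converges, so $M_n:=M(x_1,\dots,x_n)\to 0$. Everything is aimed at showing that such an extremal sequence cannot exist.

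\noindent\textbf{Step 1: normalize the extremal sequence.} I would first rearrange the entries of $x$ in non-increasing order. As $M$ is symmetric and increasing, for every $n$ the value $M(x_{\pi(1)},\dots,x_{\pi(n)})$ is largest when $\{x_{\pi(1)},\dots,x_{\pi(n)}\}$ consists of the $n$ largest entries, and these choices are compatible as $n$ grows; hence the non-increasing rearrangement $x^{\downarrow}$ satisfies $\sum_n M(x^{\downarrow}_1,\dots,x^{\downarrow}_n)\ge\sum_n M(x_1,\dots,x_n)$ with $\|x^{\downarrow}\|_1=\|x\|_1$. If this were strict, $x^{\downarrow}$ would already violate \eqref{HD}; so equality persists and we may assume $x_1\ge x_2\ge\cdots$. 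Then $\min(x_1,\dots,x_n)=x_n$, so min-diminishing together with increasingness gives $M_{n+1}\le M(x_1,\dots,x_n,x_n)<M_n$ for all $n\ge N_0$, where $N_0$ is an index past which $(x_1,\dots,x_n)$ is non-constant (such an $N_0$ exists because $x\neq 0$ precludes all entries being equal); moreover $x_n=\min(x_1,\dots,x_n,x_n)\le M(x_1,\dots,x_n,x_n)<M_n$, i.e.\ $M_n>x_n$ for $n\ge N_0$.

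\noindent\textbf{Step 2: manufacture a sequence violating the Hardy property.} I would now produce $y\in\ell_1(I)$ with $\sum_n M(y_1,\dots,y_n)>C\|y\|_1$, the sought contradiction. Plain insertions or deletions will not do: adding or removing a single entry of value $v$ changes $\sum_n M(\cdot_1,\dots,\cdot_n)$ by an amount which, after expansion using the equality for $x$ and a telescoping comparison of the shifted partial means, is never more than $Cv$ in the relevant direction; and the non-increasing rearrangement has already been used. So $y$ must come from $x$ by a $\|\cdot\|_1$-preserving \emph{regrouping} that is not a permutation of the same multiset: using repetition invariance one replaces a block of equal entries (built from a run of tail values, or produced by splitting one tail entry into equal pieces) by a single merged entry, without altering any partial mean that ends exactly on that block, while at the junction min-diminishing forces a \emph{strict} change. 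Placing the block deep in the tail — so that the entries involved, the values $M_n$ at the affected positions, and the remainder $\sum_{m>n}M_m$ are all small — and propagating the comparison through increasingness, one obtains $\sum_n M(y_1,\dots,y_n)>\sum_n M(x_1,\dots,x_n)=C\|x\|_1=C\|y\|_1$.

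\noindent\textbf{Main obstacle.} The delicate point is Step 2. A local change reindexes the whole tail and therefore perturbs infinitely many partial means simultaneously, while min-diminishing supplies only a qualitative strict inequality with no uniform quantitative lower bound on the gain. The crux is a careful estimate showing this ``local'' gain survives: one chooses the regrouped block far enough out and uses $M_n\to0$ together with the monotone convergence $\Hc_n(M)\nearrow\Hc_\infty(M)$ of Proposition~\ref{lem:1} (and the nondecreasingness of the Hardy sequence, which localizes the whole argument to a long enough initial segment) to absorb the uncontrolled tail contributions. Closing these estimates is where the real work lies.
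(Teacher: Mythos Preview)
Your Step 1 matches the paper's reduction (and, like the paper, it tacitly uses symmetry to justify that rearranging does not decrease the sum of partial means). The genuine gap is Step 2: you never specify the ``regrouping'' $y$, and you yourself flag that the tail estimates are where ``the real work lies''. As written this is a plan, not a proof; there is no construction and no inequality to check.

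The paper bypasses all of this with a single, global move: after reducing to a non-increasing $x$, set $\hat{x}:=(x_1,x_1,x_2,x_2,\dots)$. Repetition invariance gives $M(\hat{x}_1,\dots,\hat{x}_{2n})=M(x_1,\dots,x_n)$, while for the odd index $2n-1$ the vector $(\hat{x}_1,\dots,\hat{x}_{2n-1})=(x_1,x_1,\dots,x_{n-1},x_{n-1},x_n)$ has minimum $x_n$, so min-diminishing yields $M(\hat{x}_1,\dots,\hat{x}_{2n-1})\ge M(x_1,\dots,x_n)$, with strict inequality whenever $x_1\neq x_n$ (which happens for some $n$ since $x_n\to0$). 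Summing,
\[
2\sum_{n\ge1} M(x_1,\dots,x_n)\;<\;\sum_{m\ge1} M(\hat{x}_1,\dots,\hat{x}_m)\;\le\;\Hc_\infty(M)\,\|\hat{x}\|_1\;=\;2\,\Hc_\infty(M)\,\|x\|_1,
\]
and dividing by $2$ gives the strict inequality directly. In your contradiction framing, $\hat{x}$ \emph{is} the sequence $y$ you were looking for: it preserves $\|\cdot\|_1$ up to a factor $2$, every partial mean is controlled exactly by repetition invariance and min-diminishing, and no asymptotic or localized estimate is needed---neither $M_n\to0$ nor $\Hc_n(M)\nearrow\Hc_\infty(M)$ plays any role. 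The obstacle you identified (a local modification reindexes an infinite tail) disappears because the doubling acts uniformly on the whole sequence rather than at one spot.
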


\begin{proof}
Let $x=(x_n)_{n=1}^\infty \in \ell_1(I)$ be arbitrary.
If $x_l<x_k$ for some $l<k$ then, for the sequence
\Eq{*}{
  x'_n=\begin{cases} x_n & n \notin \{k,l\}, \\ x_k & n=l, \\ x_l &n=k, \end{cases}
}
we have
\Eq{*}{
M(x_1,\ldots,x_n)&=M(x_1',\ldots,x_n') \qquad\text{ for }n < l \text{ or } n \ge k,\\
M(x_1,\ldots,x_n)&\leq M(x_1',\ldots,x_n') \qquad \text{ for }n \in \{l,\ldots,k-1\}.
}
Therefore
\Eq{*}{
M(x_1)+\cdots+M(x_1,\dots,x_n)+\cdots\leq M(x_1')+\dots+M(x_1',\dots,x_n')+\cdots.
}
Whence we may assume that $x$ is non-increasing.

Let $\hat{x}=(x_1,x_1,\ldots,x_n,x_n,\dots)$. Then, by the repetition invariance and the $\min$-diminishing property
of $M$, we get
\Eq{*}{
M(x_1,\ldots,x_n)&=M(\hat{x}_1,\ldots,\hat{x}_{2n}), \\
M(x_1,\ldots,x_n)&=M(\hat{x}_1,\ldots,\hat{x}_{2n-1}) \qquad\text{ if } x_1 = x_n,\\
M(x_1,\ldots,x_n)&<M(\hat{x}_1,\ldots,\hat{x}_{2n-1}) \qquad\text{ if } x_1 \neq x_n.
}
Since $x_n\to0$ as $n\to\infty$, hence $x_1 \neq x_n$ holds for some $n$. Therefore
\Eq{*}{
2\cdot \sum_{n=1}^{\infty} M(x_1,\ldots,x_n) < \sum_{n=1}^{\infty} M(\hat{x}_1,\ldots,\hat{x}_{n}) 
\le \Hc_\infty(M) \sum_{n=1}^{\infty}\hat{x}_n = 2\Hc_\infty(M) \sum_{n=1}^{\infty}x_n.
}
This completes the proof of the proposition.
\end{proof}

The next result offers a fundamental lower estimate for the Hardy constant of a mean.

\begin{thm}
\label{thm:lower_estim_Hc}
Let $M \colon \bigcup_{n=1}^{\infty} I^n \to I$ be a mean. Then, for all sequences $(x_n)_{n=1}^\infty$ in $I$ that 
does not belong to $\ell_1$,
\Eq{0}{
  \liminf_{n \to \infty} x_n^{-1}M(x_1,\ldots,x_n)\leq\Hc_\infty(M).
}
\end{thm}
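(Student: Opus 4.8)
The plan is to derive \eq{0} from the finite‑truncation inequalities $\Hc_n(M)\le\Hc_\infty(M)$ (which are proved inside Proposition~\ref{lem:1}) together with the divergence of $\sum x_n$. If $\Hc_\infty(M)=+\infty$ there is nothing to prove, so I would assume $M$ is a Hardy mean and that the terms of $(x_n)$ are positive (so that $x_n^{-1}M(x_1,\dots,x_n)$ makes sense).

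Set $L:=\liminf_{n\to\infty}x_n^{-1}M(x_1,\dots,x_n)$ and fix an arbitrary real number $\Lambda<L$; it suffices to prove $\Lambda\le\Hc_\infty(M)$, since letting $\Lambda$ approach $L$ then gives the claim (and, incidentally, rules out $L=+\infty$). By the definition of the lower limit there is an index $N\in\N$ with $M(x_1,\dots,x_k)>\Lambda x_k$ for every $k\ge N$. Summing these inequalities over $k=N,\dots,n$ (for $n\ge N$) and then enlarging the left‑hand sum by adding back the nonnegative terms $M(x_1),\dots,M(x_1,\dots,x_{N-1})$, one obtains
\[
\Lambda\,(x_N+\cdots+x_n)<M(x_1)+\cdots+M(x_1,\dots,x_n)\le\Hc_\infty(M)\,(x_1+\cdots+x_n),
\]
where the last step is precisely $\Hc_n(M)\le\Hc_\infty(M)$. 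Dividing by the partial sum $x_N+\cdots+x_n$ (which is positive for all large $n$) yields
\[
\Lambda<\Hc_\infty(M)\cdot\frac{x_1+\cdots+x_n}{x_N+\cdots+x_n}=\Hc_\infty(M)\Bigl(1+\frac{x_1+\cdots+x_{N-1}}{x_N+\cdots+x_n}\Bigr).
\]

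Since $(x_n)\notin\ell_1$, deleting the finite initial block $x_1,\dots,x_{N-1}$ still leaves a divergent series, hence $x_N+\cdots+x_n\to\infty$ as $n\to\infty$; therefore the bracketed factor tends to $1$ and, passing to the limit, $\Lambda\le\Hc_\infty(M)$. As $\Lambda<L$ was arbitrary, $L\le\Hc_\infty(M)$, which is \eq{0}. I do not anticipate a genuine obstacle here; the only subtlety worth flagging is that $(x_n)$ is not summable, so one cannot feed it into the $\ell_1$‑Hardy inequality \eq{HD} directly — the correct device is the uniform bound $\Hc_n(M)\le\Hc_\infty(M)$ for finite truncations, after which the divergence of the partial sums does all the work. (Note that this statement in particular sharpens Lemma~\ref{lem:Pas15}.)
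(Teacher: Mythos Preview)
Your argument is correct and follows essentially the same route as the paper's proof: both reduce to the finite-truncation bound $\Hc_n(M)\le\Hc_\infty(M)$ from Proposition~\ref{lem:1}, sum the pointwise inequality $M(x_1,\dots,x_k)>\Lambda x_k$ over a tail, and then exploit the divergence of $\sum x_n$ to absorb the fixed initial block $x_1+\cdots+x_{N-1}$. The only cosmetic difference is that the paper argues by contradiction (choosing a single index $n_1$ with $\sum_{n\le n_0}x_n\le\varepsilon\sum_{n_0<n\le n_1}x_n$) whereas you pass to the limit $n\to\infty$ directly; the underlying idea is identical.
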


\begin{proof} Assume, on the contrary, that 
\Eq{1}{
  \Hc_\infty(M)<\liminf_{n\to\infty} x_n^{-1}M(x_1,\dots,x_n).
}
Then, there exists $\varepsilon>0$ and $n_0$ such that, for all $n\geq n_0$,
\Eq{*}{
  (1+\varepsilon) \Hc_\infty(M)x_n < M(x_1,\dots,x_n).
}
Choose $n_1>n_0$ such that
\Eq{2}{
  \sum_{n=1}^{n_0} x_n \leq \varepsilon\sum_{n=n_0+1}^{n_1} x_n 
}
Thus, using \eq{1}, Proposition \ref{lem:1}, and finally \eq{2}, we obtain
\Eq{*}{
  \sum_{n=n_0+1}^{n_1} (1+\varepsilon)\Hc_\infty(M)x_n
   &<\sum_{n=n_0+1}^{n_1} M(x_1,\dots,x_n)\leq\sum_{n=1}^{n_1} M(x_1,\dots,x_n)\\
   &\leq \Hc_{n_1}(M) \sum_{n=1}^{n_1} x_n  \leq \Hc_{\infty}(M) \sum_{n=1}^{n_1} x_n
   \leq (1+\varepsilon)\Hc_\infty(M) \sum_{n=n_0+1}^{n_1} x_n.
}
This contradiction validates \eq{0}.
\end{proof}

The main result of our paper is contained in the following theorem.

\begin{thm}\label{thm:main}
Let $M \colon \bigcup_{n=1}^{\infty} \R_+^n \to\R_+$ be an increasing, symmetric, repetition invariant, and Jensen 
concave mean. Then
\Eq{HCF}{
\Hc_\infty(M)=\sup_{y>0} \liminf_{n \to \infty} \frac ny \cdot M\left(\frac y1,\frac y2,\ldots,\frac yn 
\right).
}
\end{thm}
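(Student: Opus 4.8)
Write $H$ for the right-hand side of \eq{HCF}. The plan is to prove the inequalities $H\le\Hc_\infty(M)$ and $\Hc_\infty(M)\le H$ separately; both turn out to be short consequences of results already established above.

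For the estimate $H\le\Hc_\infty(M)$, I would fix $y>0$ and apply Theorem~\ref{thm:lower_estim_Hc} to the sequence given by $x_n:=\frac yn$. This sequence lies in $\R_+$ and does not belong to $\ell_1$, and since $x_n^{-1}M(x_1,\dots,x_n)=\frac ny\,M\big(\frac y1,\dots,\frac yn\big)$, Theorem~\ref{thm:lower_estim_Hc} gives at once
\Eq{*}{
\liminf_{n\to\infty}\frac ny\,M\Big(\frac y1,\dots,\frac yn\Big)\le\Hc_\infty(M).
}
Taking the supremum over $y>0$ yields $H\le\Hc_\infty(M)$.

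For the reverse inequality, the key tool is Corollary~\ref{cor:kedlaya_type}, which is applicable because $M$ is increasing, symmetric, Jensen concave and repetition invariant on $\R_+$. Let $x=(x_n)_{n=1}^\infty\in\ell_1(\R_+)$ be arbitrary and set $S_n:=x_1+\cdots+x_n$ and $S:=\sum_{k=1}^\infty x_k$. For every $n\in\N$, Corollary~\ref{cor:kedlaya_type} together with $S_n\le S$ and the increasingness of $M$ gives
\Eq{*}{
\sum_{k=1}^n M(x_1,\dots,x_k)\le n\cdot M\Big(\frac{S_n}1,\frac{S_n}2,\dots,\frac{S_n}n\Big)
\le n\cdot M\Big(\frac S1,\frac S2,\dots,\frac Sn\Big).
}
The left-hand side is nondecreasing in $n$, so its limit, namely the value of the series $\sum_{k=1}^\infty M(x_1,\dots,x_k)$, is at most $\liminf_{n\to\infty}n\cdot M\big(\frac S1,\dots,\frac Sn\big)$; hence, using the definition of $H$ with the particular choice $y=S$,
\Eq{*}{
\sum_{k=1}^\infty M(x_1,\dots,x_k)\le\liminf_{n\to\infty}n\cdot M\Big(\frac S1,\dots,\frac Sn\Big)
=S\cdot\liminf_{n\to\infty}\frac nS\,M\Big(\frac S1,\dots,\frac Sn\Big)\le S\cdot H.
}
Since $\sum_{k=1}^\infty x_k=S$ and $x$ was arbitrary, this proves $\Hc_\infty(M)\le H$, completing the proof.

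I expect the only genuinely delicate point to be the handling of the $\liminf$. For a fixed $n$ there is no reason that $\frac ny\,M\big(\frac y1,\dots,\frac yn\big)$ should be bounded by $H$, so the argument must exploit that the partial sums $\sum_{k=1}^n M(x_1,\dots,x_k)$ are monotone in $n$: this monotonicity is exactly what converts the termwise estimate coming from Corollary~\ref{cor:kedlaya_type} into a bound by $\liminf_{n\to\infty}n\cdot M\big(\frac S1,\dots,\frac Sn\big)$. One also has to replace $S_n$ by its limit $S$ via the increasingness of $M$ \emph{before} taking that $\liminf$, rather than attempting to pass to a limit inside $M$ (which would need a continuity argument). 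Apart from this, the hypotheses on $M$ are used only through Corollary~\ref{cor:kedlaya_type} and the hypothesis-free Theorem~\ref{thm:lower_estim_Hc}.
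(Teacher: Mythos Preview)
Your proof is correct and follows essentially the same route as the paper: the lower bound comes from Theorem~\ref{thm:lower_estim_Hc} applied to the harmonic sequence $x_n=y/n$, and the upper bound combines Corollary~\ref{cor:kedlaya_type} with the increasingness of $M$ to replace $S_n$ by $S=\|x\|_1$. The only cosmetic difference is in the handling of the $\liminf$: the paper extracts a subsequence $(n_k)$ along which $n_k\,M(S/1,\dots,S/n_k)\le(C+\tfrac1k)S$ and then lets $k\to\infty$, whereas you exploit directly that a nondecreasing sequence dominated termwise by $(b_n)$ has limit at most $\liminf_n b_n$. Both arguments are equivalent; yours is arguably a shade cleaner and also makes the case $H=\infty$ automatic without a separate remark.
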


As a trivial consequence of the above result, $M$ is a Hardy mean if and only if the number $\Hc_\infty(M)$ given in 
\eq{HCF} is finite. 

\begin{proof} 
For the proof of the theorem, denote 
\Eq{*}{
C:=\sup_{y>0} \liminf_{n \to \infty} \frac ny \cdot M\left(\frac y1,\frac y2,\ldots,\frac yn \right).
}
The inequality $\Hc_\infty(M)\geq C$ is simply a consequence of Theorem~\ref{thm:lower_estim_Hc}.

To show the reversed inequality, we may assume that $C$ is finite. Fix $x \in \ell_1(\R_+)$ and denote $y:=\|x\|_1$. 
Then 
there exists a sequence $(n_k)$, $n_k \to \infty$ such that 
\Eq{*}{
n_k \cdot M\Big(\frac{y}1,\frac{y}2,\ldots,\frac{y}{n_k}\Big) \le (C+\tfrac{1}{k})y,\qquad k \in \N.
}
By the increasingness of $M$ and by the obvious inequality $x_1+\cdots+x_{n_k}\leq y$, the previous inequality 
yields
\Eq{*}{
n_k \cdot M\left(x_1+\cdots+x_{n_k},\frac{x_1+\cdots+x_{n_k}}2,\ldots,\frac{x_1+\cdots+x_{n_k}}{n_k}\right) 
\le \big(C+\tfrac{1}{k}\big)y,\qquad k \in \N.
}
Therefore, in view of Corollary~\ref{cor:kedlaya_type}, we obtain
\Eq{*}{
M(x_1)+M(x_1,x_2)+\cdots+M(x_1,\ldots,x_{n_k}) \le (C+\tfrac{1}{k})y,\qquad k \in \N.
}
Upon passing the limit $k \to \infty$, one gets
\Eq{*}{
  \sum_{n=1}^\infty M(x_1,\dots,x_n) \le Cy=C \norm{x}_1.
}
This completes the proof of inequality $\Hc_\infty(M)\leq C$.
\end{proof}

\begin{cor}
\label{cor:main_hom}
If, in addition to the assumptions of Theorem~\ref{thm:main}, $M$ is also homogeneous, then
\Eq{*}{
  \Hc_\infty(M)= \lim_{n \to \infty} n \cdot M\left(1,\tfrac 12,\ldots,\tfrac 1n \right).
}
\end{cor}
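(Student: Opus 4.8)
The plan is to read the formula off Theorem~\ref{thm:main}: homogeneity first eliminates the supremum over $y$ in \eq{HCF}, and then a monotonicity argument turns the remaining $\liminf$ into a genuine limit. Write $a_n:=n\,M\bigl(1,\tfrac12,\dots,\tfrac1n\bigr)$. Since $M$ is homogeneous, $\tfrac ny\,M\bigl(\tfrac y1,\tfrac y2,\dots,\tfrac yn\bigr)=n\,M\bigl(1,\tfrac12,\dots,\tfrac1n\bigr)=a_n$ for every $y>0$, so the expression inside the supremum in \eq{HCF} does not depend on $y$ and Theorem~\ref{thm:main} collapses to $\Hc_\infty(M)=\liminf_{n\to\infty}a_n$. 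Hence it suffices to prove that $(a_n)_{n=1}^{\infty}$ is non-decreasing; then $\liminf_{n\to\infty}a_n=\lim_{n\to\infty}a_n=\Hc_\infty(M)$, which is exactly the assertion.

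So the real task is the inequality $a_n\le a_{n+1}$. Fix $n$. By repetition invariance, $M\bigl(1,\tfrac12,\dots,\tfrac1n\bigr)=M(v)$ and $M\bigl(1,\tfrac12,\dots,\tfrac1{n+1}\bigr)=M(w)$, where $v$ is obtained by repeating each of $1,\tfrac12,\dots,\tfrac1n$ exactly $n+1$ times and $w$ by repeating each of $1,\tfrac12,\dots,\tfrac1{n+1}$ exactly $n$ times, so that $v$ and $w$ both have length $n(n+1)$. Using homogeneity, $a_n\le a_{n+1}$ becomes $M(v)\le\tfrac{n+1}n M(w)=M\bigl(\tfrac{n+1}n w\bigr)$. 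Now $M$ restricted to $\R_+^{n(n+1)}$ is increasing and --- being symmetric and Jensen concave on an open convex set, hence continuous, hence concave --- it is also Schur concave. Therefore $M(v)\le M\bigl(\tfrac{n+1}n w\bigr)$ will follow once one verifies the weak majorization statement: for every $k\in\{1,\dots,n(n+1)\}$, the sum of the $k$ smallest coordinates of $\tfrac{n+1}n w$ is at least the sum of the $k$ smallest coordinates of $v$.

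This last verification is where the work lies, and I expect it to be the main obstacle. It is an elementary but slightly delicate computation on the ascending partial sums of the two ``blown-up'' vectors: those partial sums agree whenever $k$ is a multiple of $n$ (the case $k=n(n+1)$ being simply that the total mass of $\tfrac{n+1}n w$ exceeds that of $v$ by $1$), while on each block of $n$ consecutive indices between two such values the partial sums of $v$ grow first below, then above the constant increments of $\tfrac{n+1}n w$, ending level with them; hence the domination holds for all $k$. It is worth stressing that coarser estimates do not suffice here: simply shrinking the first $n$ coordinates of $\bigl(1,\tfrac12,\dots,\tfrac1{n+1}\bigr)$ by increasingness loses too much, because means in this class can be $\min$-diminishing (as the power means $\P_p$ with $p<1$ already are), which is precisely why the detour through repetition invariance and Schur concavity is needed.
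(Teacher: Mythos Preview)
Your proof is correct and follows the same overall strategy as the paper: use homogeneity to drop the supremum in \eq{HCF}, reduce to showing that $(a_n)$ is nondecreasing, and compare the two length-$n(n+1)$ vectors obtained via repetition invariance and homogeneity. The paper's vectors $u,v$ are exactly $n$ times your $v$ and $\tfrac{n+1}{n}w$, respectively. The only difference lies in the last step. You pass through Schur concavity (using that a mean is locally bounded, so Jensen concavity upgrades to concavity by Bernstein--Doetsch) and then verify the weak supermajorization $v\succ^{w}\tfrac{n+1}{n}w$ by the ascending partial-sum computation you sketch. The paper instead splits both vectors into $n+1$ consecutive blocks $u^{(0)},\dots,u^{(n)}$ and $v^{(0)},\dots,v^{(n)}$ of size $n$, observes that for $i\ge1$ the arithmetic mean of $u^{(i)}$ equals the constant value filling $v^{(i)}$ (while $u^{(0)}\le v^{(0)}$ coordinatewise), and concludes via symmetry and Jensen concavity by averaging over all permutations within each block. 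That block-mean identity is exactly the reason your partial sums coincide at multiples of $n$, so the two arguments are the same in substance; the paper's packaging simply bypasses the explicit partial-sum check that you identify as ``the main obstacle''.
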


\begin{proof} In view of the previous theorem we only need to prove that the limit of the sequence $(p_n)$ exists 
(possible infinite), where
\Eq{*}{
p_n:=n \cdot M\left(1,\tfrac 12,\ldots,\tfrac 1n \right).
}
For, it suffices to show that this sequence is nondecreasing. 
Fix $n \in \N$. Let us consider the two vectors $u,v$ of dimension $n(n+1)$ defined by
\Eq{*}{
u&:=(\underbrace{n,\ldots,n}_{n+1},\underbrace{\tfrac{n}2,\ldots,\tfrac{n}2}_{n+1},\ldots,\underbrace{\tfrac{n}{n-1},
\ldots,\tfrac{n}{n-1}}_{n+1},\underbrace{1,\ldots,1}_{n+1});\\
v&:=(\underbrace{n+1,\ldots,n+1}_{n}, \underbrace{\tfrac{n+1}2,\ldots,\tfrac{n+1}2}_{n}, \ldots, 
\underbrace{\tfrac{n+1}n,\ldots,\tfrac{n+1}n}_{n},\underbrace{1,\ldots,1}_{n}).
}
By the homogeneity and repetition invariance of $M$, we have that $M(u)=p_n$ and $M(v)=p_{n+1}$. 
Divide vectors $u$ and $v$ into $n+1$ parts of dimension $n$:
\Eq{*}{
u^{(i)}&:=(\underbrace{\tfrac{n}{i},\ldots,\tfrac{n}{i}}_{i},\underbrace{\tfrac{n}{i+1},\ldots,\tfrac{n}{i+1}}_{n-i}),
&&\qquad i=0,\ldots,n;\\
v^{(i)}&:=(\underbrace{\tfrac{n+1}{i+1},\ldots,\tfrac{n+1}{i+1}}_{n}),&&\qquad i=0,\ldots,n.
}
For $i \ge 1$, each element $\tfrac ni$ appears $(n-i+1)$ times in $u^{(i-1)}$ and $i$ times in $u^{(i)}$, that is, 
$(n+1)$ times altogether. Therefore, the arithmetic mean of $u^{(i)}$, denoted by $\A(u^{(i)})$, is equal to 
$\tfrac{n+1}{i+1}$ for $i=1,\ldots,n$ and $\A(u^{(0)})=n$.

Let $u^{(i)}_k$, for $k=1,\ldots,n!$ and $i=0,\ldots,n$, denote the vectors that are obtained from all possible 
permutations of the components of $u^{(i)}$. Observe that 
\Eq{*}{
  (u^{(0)},v^{(1)},\ldots,v^{(n)})=\frac{1}{n!} \sum_{k=1}^{n!} (u^{(0)}_k,\ldots,u^{(n)}_k).
}
Then, by the increasingness, Jensen concavity and symmetry of the mean $M$, we obtain
\Eq{*}{
p_{n+1} = M(v) 
&= M(v^{(0)},v^{(1)},\ldots,v^{(n)})\geq M(u^{(0)},v^{(1)},\ldots,v^{(n)})\\
&\ge \frac{1}{n!} \sum_{k=1}^{n!} M(u^{(0)}_k,\ldots,u^{(n)}_k) 
=M(u^{(0)},\ldots,u^{(n)})
=M(u)
=p_n.
}
This proves that $(p_n)$ is non-deceasing and, therefore it has a (possibly infinite) limit.
\end{proof}

\section{Applications}

In this section we demonstrate the consequences of our results for Gini means and also for the Gaussian product of 
symmetric, homogeneous, increasing, Jensen concave and repetition invariant means, in particular, the Gaussian product 
of Hölder means.

\subsection{Gini means}

Gini means are symmetric and repetition invariant and min-diminishing (first two properties are simple while the third 
one was proved in \cite{Pal82a}). Moreover, by the results of Losonczi \cite{Los71a,Los71c}, the Gini mean $\G_{p,q}$ 
is increasing and Jensen concave if and only if $pq\le 0$ and $\min(p,q) \le 0 \le \max(p,q) \le 1$, respectively. In 
particular it implies that H\"older mean $\P_p$ is Jensen concave if and only if $p\le1$.

In view of Theorem~\ref{thm:Pas15}, we have the characterization of pairs $(p,q)$ such that $\G_{p,q}$ is a Hardy mean.
In order to calculate the Hardy constant of Gini means using Corollary~\ref{cor:main_hom}, we need to establish 
the following result.

\begin{lem}\label{lem:gini_harmonic}
Let $p,q \in (-\infty,1)$. Then 
\Eq{*}{
  \lim_{n \to \infty} n \cdot \G_{p,q}\left(1,\tfrac 12,\ldots,\tfrac 1n \right) 
  = \begin{cases}
    \Big( \dfrac{1-q}{1-p} \Big)^{\frac1{p-q}} & \mbox{if } p \ne q, \\[4mm]
    e^{\frac1{1-p}} & \mbox{if } p=q.
    \end{cases}
}
\end{lem}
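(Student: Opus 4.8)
The plan is to compute the limit
\Eq{*}{
L:=\lim_{n \to \infty} n \cdot \G_{p,q}\left(1,\tfrac 12,\ldots,\tfrac 1n \right)
}
by reducing it to a Riemann-sum / integral asymptotics computation, treating the cases $p\ne q$ and $p=q$ separately but in parallel. Write $s_r(n):=\sum_{k=1}^n k^{-r}$ for the relevant exponents. When $p\ne q$, by definition
\Eq{*}{
n\cdot\G_{p,q}\left(1,\tfrac12,\ldots,\tfrac1n\right)
 = n\left(\frac{\sum_{k=1}^n k^{-p}}{\sum_{k=1}^n k^{-q}}\right)^{\frac1{p-q}}
 = n\left(\frac{s_p(n)}{s_q(n)}\right)^{\frac1{p-q}},
}
so the whole task is to pin down the asymptotic behaviour of $s_p(n)$ and $s_q(n)$ for $p,q<1$ precisely enough that, after dividing by $n^{p-q}$ inside the bracket and taking the $(p-q)$-th root, the factor $n$ out front is exactly absorbed.

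First I would record the standard asymptotics: for $r<1$, $s_r(n)=\sum_{k=1}^n k^{-r}=\frac{n^{1-r}}{1-r}+O(n^{-r}+1)$ — more precisely $s_r(n)=\frac{n^{1-r}}{1-r}+O(\max(1,n^{-r}))$, which follows by comparing the sum with $\int_1^n t^{-r}\,dt$ (for $0\le r<1$ monotonicity gives the integral test bound; for $r<0$ one can still use the Euler–Maclaurin first-order estimate, or simply note $s_r(n)\sim\frac{n^{1-r}}{1-r}$ with a lower-order remainder). The key point is only that $s_r(n)=\frac{n^{1-r}}{1-r}\,(1+o(1))$. Then
\Eq{*}{
\frac{s_p(n)}{s_q(n)}=\frac{1-q}{1-p}\cdot n^{(1-p)-(1-q)}\,(1+o(1))=\frac{1-q}{1-p}\cdot n^{q-p}\,(1+o(1)),
}
so $\left(\frac{s_p(n)}{s_q(n)}\right)^{1/(p-q)}=\left(\frac{1-q}{1-p}\right)^{1/(p-q)} n^{-1}\,(1+o(1))$, and multiplying by $n$ gives exactly $\left(\frac{1-q}{1-p}\right)^{1/(p-q)}$. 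Note that $\big(\tfrac{1-q}{1-p}\big)^{1/(p-q)}$ is symmetric under swapping $p$ and $q$, as it must be since $\G_{p,q}=\G_{q,p}$; this is a useful sanity check.

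For the case $p=q$, the Gini mean is
\Eq{*}{
\G_{p,p}\left(1,\tfrac12,\ldots,\tfrac1n\right)
 = \exp\left(\frac{\sum_{k=1}^n k^{-p}\ln(1/k)}{\sum_{k=1}^n k^{-p}}\right)
 = \exp\left(-\frac{\sum_{k=1}^n k^{-p}\ln k}{s_p(n)}\right),
}
so I would write $n\cdot\G_{p,p}(\dots)=\exp\!\big(\ln n-\tfrac{t_p(n)}{s_p(n)}\big)$ with $t_p(n):=\sum_{k=1}^n k^{-p}\ln k$, and estimate $t_p(n)$ by the integral $\int_1^n t^{-p}\ln t\,dt=\frac{n^{1-p}\ln n}{1-p}-\frac{n^{1-p}-1}{(1-p)^2}$, giving $t_p(n)=\frac{n^{1-p}\ln n}{1-p}-\frac{n^{1-p}}{(1-p)^2}+o(n^{1-p})$ (uniformly up to lower order — again integral comparison for $0\le p<1$, Euler–Maclaurin otherwise). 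Dividing by $s_p(n)=\frac{n^{1-p}}{1-p}(1+o(1))$ yields $\tfrac{t_p(n)}{s_p(n)}=\ln n-\tfrac1{1-p}+o(1)$, so $\ln n-\tfrac{t_p(n)}{s_p(n)}=\tfrac1{1-p}+o(1)$ and the limit is $e^{1/(1-p)}$. As a consistency check, letting $q\to p$ in $\big(\tfrac{1-q}{1-p}\big)^{1/(p-q)}$ recovers $e^{1/(1-p)}$, matching the case distinction in the statement.

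The only genuine subtlety — the part I would be most careful about — is the uniformity/validity of the remainder estimates for $p$ (or $q$) negative, where $k^{-p}\to\infty$ and the summands are increasing rather than decreasing: there the integral test does not directly bound the sum from both sides, so I would invoke the first-order Euler–Maclaurin formula $\sum_{k=1}^n \varphi(k)=\int_1^n\varphi+\tfrac12(\varphi(1)+\varphi(n))+O\big(\int_1^n|\varphi'|\big)$ with $\varphi(t)=t^{-r}$ or $\varphi(t)=t^{-r}\ln t$, and check that the error terms are $o(n^{1-r})$; this is routine since $\varphi'$ has the same leading order as $\varphi/t$. Everything else is bookkeeping with the dominant terms, and the two desired constants fall out immediately once the leading asymptotics of $s_p(n)$, $s_q(n)$ and $t_p(n)$ are in hand.
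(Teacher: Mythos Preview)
Your proposal is correct and takes essentially the same approach as the paper: both arguments rest on the asymptotic $\sum_{k=1}^n k^{-r}\sim n^{1-r}/(1-r)$ for $r<1$, the paper obtaining it by rewriting the normalized sum as a Riemann sum for $\int_0^1 x^{-r}\,dx=\frac{1}{1-r}$ while you use integral comparison/Euler--Maclaurin directly. You are in fact more explicit than the paper on the case $p=q$, which the paper dismisses in one line as ``analogous'' without writing out the estimate for $\sum_{k=1}^n k^{-p}\ln k$.
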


\begin{proof}
For every $s \in (-1,\infty)$, one has
\Eq{*}{
  \lim_{n\to \infty} \frac1n \sum_{i=1}^n \Big( \frac in\Big)^s=\int_0^1 x^s dx = \frac1{1+s}.
}
Using this equality, for $p,\,q <1$, $p \ne q$, we simply obtain
\Eq{*}{
\lim_{n \to \infty} n \cdot \G_{p,q}\left(1,\tfrac 12,\ldots,\tfrac 1n \right) 
&=\lim_{n \to \infty} n \cdot \left( 
\frac{1+2^{-p}+3^{-p}+\cdots+n^{-p}}{1+2^{-q}+3^{-q}+\cdots+n^{-q}}\right)^{\frac1{p-q}} \\
&=\lim_{n \to \infty} \left( \frac
{\frac1n\left[\left(\tfrac 1n\right)^{-p}+\left(\tfrac 2n\right)^{-p}+\left(\tfrac 3n\right)^{-p}
+\cdots+\left(\tfrac {n-1}n\right)^{-p}+1\right]}
{\frac1n\left[\left(\tfrac 1n\right)^{-q}+\left(\tfrac 2n\right)^{-q}+\left(\tfrac 3n\right)^{-q}
+\cdots+\left(\tfrac{n-1}n\right)^{-q}+1\right]}\right)^{\frac1{p-q}}\\
&=\left( \frac{1-q}{1-p} \right)^{\frac1{p-q}}.
}
The proof for the case $p=q<1$ is analogous.
\end{proof}

Using this lemma and the properties that are mentioned just before, we obtain the following

\begin{cor}
Let $p,q \in \R$, $\min(p,q) \le 0 \le \max(p,q)<1$. Then
\Eq{*}{
\Hc_\infty(\G_{p,q})
=\begin{cases} 
\left( \dfrac{1-q}{1-p} \right)^{\frac1{p-q}} & p \ne q, \\
e & p=q=0.
\end{cases}
}
\end{cor}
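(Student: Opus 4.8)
The plan is to combine Corollary~\ref{cor:main_hom} with Lemma~\ref{lem:gini_harmonic} and the structural facts on Gini means recalled at the beginning of this subsection. First I would verify that, under the hypothesis $\min(p,q)\le 0\le\max(p,q)<1$, the Gini mean $\G_{p,q}$ satisfies all the assumptions of Corollary~\ref{cor:main_hom}: Gini means are symmetric and repetition invariant in general; by Losonczi's results $\G_{p,q}$ is increasing precisely when $pq\le 0$, which is implied by $\min(p,q)\le 0\le\max(p,q)$, and $\G_{p,q}$ is Jensen concave precisely when $\min(p,q)\le 0\le\max(p,q)\le 1$, which holds here; finally homogeneity of Gini means is clear from their defining formula. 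Since $p,q<1$ in particular, Theorem~\ref{thm:Pas15} also tells us $\G_{p,q}$ is a Hardy mean, so $\Hc_\infty(\G_{p,q})$ is finite (though this is also recovered below from the finiteness of the limit).

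Next, Corollary~\ref{cor:main_hom} gives
\Eq{*}{
  \Hc_\infty(\G_{p,q})=\lim_{n\to\infty} n\cdot\G_{p,q}\Big(1,\tfrac12,\ldots,\tfrac1n\Big),
}
and Lemma~\ref{lem:gini_harmonic} (applicable since $p,q\in(-\infty,1)$) evaluates this limit as $\big(\frac{1-q}{1-p}\big)^{\frac1{p-q}}$ when $p\ne q$ and as $e^{\frac1{1-p}}$ when $p=q$. In the case $p\ne q$ this is exactly the claimed value, so nothing further is needed. In the case $p=q$, the additional hypothesis $\min(p,q)\le 0\le\max(p,q)$ forces $p=q=0$, and then $e^{\frac1{1-p}}=e^{1}=e$, matching the second branch of the asserted formula. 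Assembling these observations completes the proof.

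I do not expect a genuine obstacle here: the result is a straightforward specialization, and the only points requiring care are bookkeeping ones — checking that the parameter region $\min(p,q)\le 0\le\max(p,q)<1$ indeed lands inside both Losonczi's concavity/monotonicity region and the domain $(-\infty,1)^2$ of Lemma~\ref{lem:gini_harmonic}, and noticing that in the diagonal case the hypotheses collapse the parameter to the origin so that $e^{1/(1-p)}$ becomes $e$. One could also remark that the finiteness of the limit furnishes an independent proof (via Corollary~\ref{cor:main_hom}) that $\G_{p,q}$ is a Hardy mean on this parameter range, consistent with Theorem~\ref{thm:Pas15}.
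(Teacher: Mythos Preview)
Your proposal is correct and follows essentially the same route as the paper's own proof: verify via Losonczi's results and the elementary structural properties that $\G_{p,q}$ meets the hypotheses of Corollary~\ref{cor:main_hom}, then read off the Hardy constant from the limit computed in Lemma~\ref{lem:gini_harmonic}. Your write-up is in fact a bit more careful than the paper's, explicitly noting that the diagonal case $p=q$ is forced to $p=q=0$ by the hypothesis, which is why the general value $e^{1/(1-p)}$ from Lemma~\ref{lem:gini_harmonic} reduces to $e$.
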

\begin{proof} Due to the assumption $\min(p,q) \le 0 \le \max(p,q)<1$ and in view of the results of Losonczi
\cite{Los71a,Los71c}, the Gini mean $\G_{p,q}$ is increasing and Jensen concave. Furthermore, 
$\G_{p,q}$ is symmetric, homogeneous, and repetition invariant. Therefore, by Corollary~\ref{cor:main_hom} and
Lemma~\ref{lem:gini_harmonic}, we have
\Eq{*}{
\Hc_\infty(\G_{p,q})=\lim_{n \to \infty} n \cdot \G_{p,q}\left(1,\tfrac 12,\ldots,\tfrac 1n \right)
=\begin{cases} 
\left( \dfrac{1-q}{1-p} \right)^{\frac1{p-q}} & p \ne q, \\
e & p=q=0,
\end{cases}
}
which was to be proved.
\end{proof}

\subsection{Gaussian product}
\begin{prop}
Let $N \in \N$ and let $M_1,\dots,M_N \colon \bigcup_{n=1}^{\infty} \R_+^n \to\R_{+}$ 
be  symmetric, homogeneous, increasing, Jensen concave and repetition invariant means.
If $M_i$ is Hardy for each $i\in\{1,\ldots,N\}$, then so is their Gaussian product $\mgp$ and
\Eq{corgauss}{
\Hc_\infty\left(\mgp\right)= \mgp \big(\Hc_\infty(M_1),\ldots,\Hc_\infty(M_N) \big).}
\end{prop}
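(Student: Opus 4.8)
The plan is to combine Corollary~\ref{cor:main_hom} with Lemma~\ref{lem:Gaussian_concave} and the characteristic functional equation (i) of the Gaussian product. First I would observe that, by Lemma~\ref{lem:Gaussian_concave}, the Gaussian product $\mgp$ of $M_1,\dots,M_N$ is itself symmetric, homogeneous, increasing, Jensen concave and repetition invariant, so Corollary~\ref{cor:main_hom} applies to each of $M_1,\dots,M_N$ and to $\mgp$. Thus it suffices to prove that
\Eq{*}{
\lim_{n\to\infty} n\cdot\mgp\Big(1,\tfrac12,\dots,\tfrac1n\Big)
=\mgp\big(\Hc_\infty(M_1),\dots,\Hc_\infty(M_N)\big),
}
where, by the same corollary, $\Hc_\infty(M_i)=\lim_{n\to\infty} n\cdot M_i(1,\tfrac12,\dots,\tfrac1n)$ for each $i$, and each such limit is finite by the Hardy assumption.

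The key step is to pass the limit inside $\mgp$. Write $h_n:=(1,\tfrac12,\dots,\tfrac1n)$. Because $\mgp$ is homogeneous of degree $1$, $n\cdot\mgp(h_n)=\mgp(n h_n)=\mgp(n,\tfrac n2,\dots,\tfrac nn)$; more usefully, I would instead use property (i), $\mgp\circ\Mb=\mgp$, to write $\mgp(h_n)=\mgp\big(M_1(h_n),\dots,M_N(h_n)\big)$ and hence
\Eq{*}{
n\cdot\mgp(h_n)=\mgp\big(n M_1(h_n),\dots,n M_N(h_n)\big)
}
by homogeneity. Now as $n\to\infty$ the vector $\big(nM_1(h_n),\dots,nM_N(h_n)\big)$ converges coordinatewise to $\big(\Hc_\infty(M_1),\dots,\Hc_\infty(M_N)\big)$. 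So the statement reduces to the continuity of $\mgp$ restricted to the $N$-dimensional domain $\R_+^N$, i.e.\ $\lim_{k}\mgp(z^{(k)})=\mgp(z)$ whenever $z^{(k)}\to z$ in $\R_+^N$.

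The main obstacle is therefore establishing this continuity of $\mgp$ on $\R_+^N$. I expect this to follow from the structural hypotheses: $\mgp$ is increasing in each variable (hence has one-sided limits along monotone sequences) and Jensen concave on the convex set $\R_+^N$; a Jensen concave function that is also increasing (equivalently, locally bounded) on an open convex subset of $\R^N$ is automatically continuous there, by the standard Bernstein--Doetsch type theorem. One must also handle approach to the boundary, but since each $\Hc_\infty(M_i)\ge 1>0$, the limit point lies in the interior $\R_+^N$, so interior continuity suffices. Combining: $n\cdot\mgp(h_n)\to\mgp\big(\Hc_\infty(M_1),\dots,\Hc_\infty(M_N)\big)$, and by Corollary~\ref{cor:main_hom} the left-hand side tends to $\Hc_\infty(\mgp)$; in particular the latter is finite, so $\mgp$ is a Hardy mean, and \eq{corgauss} holds. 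A small remaining point is to confirm that the relevant limits exist rather than merely $\liminf$'s — but this is exactly what Corollary~\ref{cor:main_hom} guarantees for each homogeneous mean in the list, including $\mgp$ via Lemma~\ref{lem:Gaussian_concave}.
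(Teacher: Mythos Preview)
Your proposal is correct and follows essentially the same route as the paper: invoke Lemma~\ref{lem:Gaussian_concave} so that Corollary~\ref{cor:main_hom} applies to $\mgp$ and to each $M_i$, use the invariance identity $\mgp\circ\Mb=\mgp$ together with homogeneity to rewrite $n\cdot\mgp(h_n)$ as $\mgp(nM_1(h_n),\dots,nM_N(h_n))$, and then pass to the limit using continuity of $\mgp$ on $\R_+^N$ obtained from Jensen concavity plus local boundedness via the Bernstein--Doetsch theorem. The paper phrases the continuity step slightly differently (Jensen concavity and local boundedness $\Rightarrow$ concavity $\Rightarrow$ continuity), but the substance is identical.
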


\begin{proof}
In view of Lemma~\ref{lem:Gaussian_concave}, the Gaussian product $\mgp$ is a symmetric, homogeneous, increasing, 
Jensen concave and repetition invariant mean. The Jensen concavity and the local boundedness by the 
Bernstein--Doetsch Theorem implies that $\mgp$ is concave and therefore it is also continuous (see \cite{BerDoe15}, 
\cite{Kuc85}). Thus, by Corollary~\ref{cor:main_hom}, we have
\Eq{*}{
\Hc_\infty(\mgp)
&= \lim_{n \to \infty} n \cdot \mgp\left(1,\tfrac 12,\ldots,\tfrac 1n \right) \\
&=\lim_{n \to \infty} n \cdot \mgp\left(M_1(1,\tfrac 12,\ldots,\tfrac 1n),\ldots,M_N(1,\tfrac 12,\ldots,\tfrac 1n) 
\right) \\
&=\lim_{n \to \infty} \mgp\left(nM_1(1,\tfrac 12,\ldots,\tfrac 1n),\ldots,nM_N(1,\tfrac 12,\ldots,\tfrac 1n) \right) \\
&= \mgp\left(\lim_{n \to \infty}nM_1(1,\tfrac 12,\ldots,\tfrac 1n),\ldots,\lim_{n \to \infty}nM_N(1,\tfrac 
12,\ldots,\tfrac 1n) \right) \\
&= \mgp\left(\Hc_\infty(M_1),\ldots,\Hc_\infty(M_N) \right),
}
which proves formula \eq{corgauss}.
\end{proof}

\begin{cor}
Let $N \in \N$ and $(\lambda_1,\dots,\lambda_N) \in \R^N$ then the Gaussian product $\P_{\otimes}$ of the Hölder means  
$\P_{\lambda_1},\dots,\P_{\lambda_N}$ is a Hardy mean if and only if $\max_{1\leq k\leq N} \lambda_k<1$. Furthermore,
in this case, 
\Eq{cor}{
\Hc_\infty\left(\P_{\otimes}\right)
=\P_{\otimes}\big(\Hc_\infty(\P_{\lambda_1}),\ldots,\Hc_\infty(\P_{\lambda_N})\big).
}
\end{cor}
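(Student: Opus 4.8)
The plan is to derive the corollary from the preceding proposition by checking that the family of Hölder means $\P_{\lambda_1},\dots,\P_{\lambda_N}$ satisfies its hypotheses and then invoking Theorem~\ref{thm:Pas15} (or rather the elementary facts recalled just before it) to decide when the product is Hardy. First I would recall that each $\P_\lambda$ is symmetric, homogeneous, increasing and repetition invariant (these are classical), and that, by the result of Losonczi quoted in the Gini subsection, $\P_\lambda=\G_{\lambda,0}$ is Jensen concave precisely when $\lambda\le 1$. So the ambient hypotheses of the proposition on the Gaussian product apply as soon as every $\lambda_k\le 1$; in particular they apply whenever $\max_k\lambda_k<1$, which will be the regime where we claim the product is Hardy.

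Next I would address the ``if and only if''. For the sufficiency direction, assume $\max_{1\le k\le N}\lambda_k<1$. Then by Theorem~\ref{thm:PM} each $\P_{\lambda_k}$ is a Hardy mean (with $\Hc_\infty(\P_{\lambda_k})=(1-\lambda_k)^{-1/\lambda_k}$, or $e$ if $\lambda_k=0$), and all of them are symmetric, homogeneous, increasing, Jensen concave and repetition invariant, so the Proposition immediately gives that $\P_\otimes$ is a Hardy mean and that formula \eq{cor} holds. For the necessity direction, suppose some $\lambda_j\ge 1$, and let $\lambda:=\max_k\lambda_k\ge 1$. The point is that $\P_\otimes\ge \P_{\min_k\lambda_k}$-type lower bounds are not what we need; instead one uses that the Gaussian product is squeezed between $\P_{\min_k\lambda_k}$ and $\P_{\max_k\lambda_k}$ in the pointwise order, because each $\P_{\lambda_k}\ge \P_{\min_i\lambda_i}$ and the Gaussian product is monotone in the obvious sense (a mean lying above all $M_i$ in the pointwise order forces $\mgp$ above the corresponding product, by the fixed-point characterization). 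Hence $\P_\otimes(x_1,\dots,x_n)\ge \P_\lambda(x_1,\dots,x_n)$ fails directional intuition — the correct inequality is $\P_\otimes\ge \P_{\min_k\lambda_k}$ — so this is not the route. The clean route is: evaluate $\P_\otimes$ on the harmonic sequence and apply Theorem~\ref{thm:lower_estim_Hc}. Since $n\,\P_{\lambda_k}(1,\tfrac12,\dots,\tfrac1n)\to\infty$ when $\lambda_k\ge1$ (the series $\sum 1/n$ for $\lambda_k=1$, divergence of the partial-power sums for $\lambda_k>1$), and since $\P_\otimes$ is continuous and $n\,\P_\otimes(1,\dots,\tfrac1n)=\P_\otimes\big(n\P_{\lambda_1}(\dots),\dots,n\P_{\lambda_N}(\dots)\big)$ dominates $n\,\P_{\min_k\lambda_k}(1,\dots,\tfrac1n)$, one must instead argue that if even a \emph{single} $\lambda_j\ge1$ then $\P_\otimes$ inherits non-Hardiness. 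Concretely, take the sequence $x_n:=1/n$, which is not in $\ell_1$; by the product structure and an easy induction $\mgp(x_1,\dots,x_n)\ge$ (something comparable to) $\A(x_1,\dots,x_n)$ whenever one coordinate mean is $\ge\P_1$, so $\liminf_n x_n^{-1}\mgp(x_1,\dots,x_n)=\infty$ by Theorem~\ref{thm:lower_estim_Hc}, contradicting the Hardy property.

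So the key steps, in order, are: (i) verify the five structural properties and Jensen concavity of each $\P_{\lambda_k}$ for $\lambda_k\le1$; (ii) in the case $\max_k\lambda_k<1$, invoke Theorem~\ref{thm:PM} for the individual Hardy property and then the Proposition to conclude both the Hardy property of $\P_\otimes$ and formula \eq{cor}; (iii) in the case $\max_k\lambda_k\ge1$, use the fixed-point characterization of $\mgp$ to show $\mgp$ dominates the arithmetic mean along the harmonic sequence and then apply Theorem~\ref{thm:lower_estim_Hc} to rule out the Hardy property. I expect step (iii)—pinning down the right comparison that forces $\liminf_n x_n^{-1}\mgp(x_1,\dots,x_n)=\infty$ from the presence of one bad exponent—to be the main obstacle, since the naive monotonicity of the Gaussian product gives a lower bound by $\P_{\min_k\lambda_k}$, which is too weak; the fix is to observe that a single coordinate with $\lambda_j\ge1$ already forces $[\Mb^k(v)]_i$ to stay above the arithmetic mean of $v$ for all $i$ after one iteration (because $\P_{\lambda_j}\ge\A$ and all means are increasing), and this propagates to the limit $\mgp$, which is the same kind of iteration argument used in the proof of Lemma~\ref{lem:Gaussian_concave}.
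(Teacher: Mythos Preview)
Your steps (i) and (ii) are correct and match the paper's proof exactly: once $\max_k\lambda_k<1$, each $\P_{\lambda_k}$ is symmetric, homogeneous, increasing, repetition invariant, Jensen concave (via Losonczi), and Hardy (via Theorem~\ref{thm:PM}), so the preceding Proposition applies and yields both the Hardy property of $\P_\otimes$ and formula \eq{cor}. The paper does nothing more for this direction.

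For necessity (your step (iii)), the paper does \emph{not} reprove anything: it simply cites Pasteczka \cite{Pas15c} for the statement that $\P_\otimes$ fails to be Hardy when $\max_k\lambda_k\ge1$. Your plan to reprove this via the harmonic sequence and Theorem~\ref{thm:lower_estim_Hc} is a reasonable strategy in spirit, but the concrete argument you propose is wrong. The claim that ``a single coordinate with $\lambda_j\ge1$ forces $[\Mb^k(v)]_i\ge\A(v)$ for all $i$ after one iteration'' is false, and so is the conclusion $\mgp(v)\ge\A(v)$. Take $N=2$, $\lambda_1=1$, $\lambda_2=-1$: here $\mgp(a,b)=\sqrt{ab}$ (the arithmetic--harmonic iteration preserves the product), so $\mgp(1,4)=2<\tfrac{5}{2}=\A(1,4)$. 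More generally, after one iteration the coordinates with negative $\lambda_i$ are dragged \emph{below} $\A(v)$, not above it, and the increasingness of the $M_i$ does nothing to prevent this.

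What you would actually need is that $n\cdot\P_\otimes(1,\tfrac12,\dots,\tfrac1n)=\mgp\big(n\P_{\lambda_1}(1,\dots,\tfrac1n),\dots,n\P_{\lambda_N}(1,\dots,\tfrac1n)\big)\to\infty$ whenever at least one argument tends to $\infty$ while the others stay bounded away from $0$. This is true for the Gaussian product of H\"older means (in the $N=2$ example above one gets $\sqrt{H_n\cdot\tfrac{2n}{n+1}}\to\infty$), but it is a nontrivial property of $\mgp$ that does \emph{not} follow from the mean inequality $\mgp\ge\min$, and your iteration sketch does not establish it. Either supply a correct argument for this divergence (e.g., exploiting strictness and homogeneity of $\mgp$), or follow the paper and invoke \cite{Pas15c}.
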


\begin{proof}
The first part of the statement of the above Corollary was proved in \cite{Pas15c} by Pasteczka. If $\lambda_k<1$, then 
$\P_{\lambda_k}$ is a Jensen concave mean, therefore \eq{cor} is a particular case of \eq{corgauss}.
\end{proof}

For example, for the \emph{geometric-harmonic mean} $\P_{-1} \otimes \P_{0}$, i.e., for the Gaussian product of the 
harmonic mean $\P_{-1}$ and the geometric mean $\P_{0}$, we get
\Eq{*}{
\Hc_\infty(\P_{-1} \otimes \P_{0})
=(\P_{-1} \otimes \P_{0})(\Hc_\infty(\P_{-1}),\Hc_\infty(\P_{0}))
=(\P_{-1} \otimes \P_{0})(2,e) \approx 2,318.
}

%\bibliography{publ,funcequ}

\begin{thebibliography}{10}

\bibitem{AczDar63c}
J.~Aczél and Z.~Daróczy.
\newblock {Über verallgemeinerte quasilineare {M}ittelwerte, die mit
  {G}ewichtsfunktionen gebildet sind}.
\newblock {\em Publ. Math. Debrecen}, 10:171–190, 1963.

\bibitem{Baj58}
M.~Bajraktarević.
\newblock {Sur une équation fonctionnelle aux valeurs moyennes}.
\newblock {\em Glasnik Mat.-Fiz. Astronom. Društvo Mat. Fiz. Hrvatske Ser.
  II}, 13:243–248, 1958.

\bibitem{Baj69}
M.~Bajraktarević.
\newblock {Über die {V}ergleichbarkeit der mit {G}ewichtsfunktionen gebildeten
  {M}ittelwerte}.
\newblock {\em Studia Sci. Math. Hungar.}, 4:3–8, 1969.

\bibitem{BajPal09b}
Sz. Baják and Zs. Páles.
\newblock {Computer aided solution of the invariance equation for two-variable
  {G}ini means}.
\newblock {\em Comput. Math. Appl.}, 58:334–340, 2009.

\bibitem{BajPal09a}
Sz. Baják and Zs. Páles.
\newblock {Invariance equation for generalized quasi-arithmetic means}.
\newblock {\em Aequationes Math.}, 77:133–145, 2009.

\bibitem{BajPal10}
Sz. Baják and Zs. Páles.
\newblock {Computer aided solution of the invariance equation for two-variable
  {S}tolarsky means}.
\newblock {\em Appl. Math. Comput.}, 216(11):3219–3227, 2010.

\bibitem{BajPal13}
Sz. Baják and Zs. Páles.
\newblock {Solving invariance equations involving homogeneous means with the
  help of computer}.
\newblock {\em Appl. Math. Comput.}, 219(11):6297–6315, 2013.

\bibitem{BerDoe15}
F.~Bernstein and G.~Doetsch.
\newblock {Zur {T}heorie der konvexen {F}unktionen}.
\newblock {\em Math. Ann.}, 76(4):514–526, 1915.

\bibitem{BorBor87}
J.~M. Borwein and P.~B. Borwein.
\newblock {\em {Pi and the {AGM}}}.
\newblock {Canadian Mathematical Society Series of Monographs and Advanced
  Texts}. John Wiley \& Sons, Inc., New York, 1987.
\newblock A study in analytic number theory and computational complexity, A
  Wiley-Interscience Publication.

\bibitem{Car32}
T.~Carleman.
\newblock {Sur les fonctions quasi-analitiques}.
\newblock {\em Conférences faites au cinquième congrès des mathématiciens
  scandinaves, Helsinki}, page 181–196, 1932.

\bibitem{Dar71b}
Z.~Daróczy.
\newblock {A general inequality for means}.
\newblock {\em Aequationes Math.}, 7(1):16–21, 1971.

\bibitem{Dar72b}
Z.~Daróczy.
\newblock {Über eine {K}lasse von {M}ittelwerten}.
\newblock {\em Publ. Math. Debrecen}, 19:211–217 (1973), 1972.

\bibitem{Dar05a}
Z.~Daróczy.
\newblock {Functional equations involving means and {G}auss compositions of
  means}.
\newblock {\em Nonlinear Anal.}, 63(5-7):e417–e425, 2005.

\bibitem{DarLos70}
Z.~Daróczy and L.~Losonczi.
\newblock {Über den {V}ergleich von {M}ittelwerten}.
\newblock {\em Publ. Math. Debrecen}, 17:289–297 (1971), 1970.

\bibitem{DarPal82}
Z.~Daróczy and Zs. Páles.
\newblock {On comparison of mean values}.
\newblock {\em Publ. Math. Debrecen}, 29(1-2):107–115, 1982.

\bibitem{DarPal83}
Z.~Daróczy and Zs. Páles.
\newblock {Multiplicative mean values and entropies}.
\newblock In {\em {Functions, series, operators, Vol. I, II (Budapest, 1980)}},
  page 343–359. North-Holland, Amsterdam, 1983.

\bibitem{DarPal02c}
Z.~Daróczy and Zs. Páles.
\newblock {Gauss-composition of means and the solution of the
  {M}atkowski–{S}utô problem}.
\newblock {\em Publ. Math. Debrecen}, 61(1-2):157–218, 2002.

\bibitem{DarPal03a}
Z.~Daróczy and Zs. Páles.
\newblock {The {M}atkowski–{S}utô problem for weighted quasi-arithmetic
  means}.
\newblock {\em Acta Math. Hungar.}, 100(3):237–243, 2003.

\bibitem{FosPhi84a}
D.~M.~E. Foster and G.~M. Phillips.
\newblock {The arithmetic-harmonic mean}.
\newblock {\em Math. Comp.}, 42(165):183–191, 1984.

\bibitem{Gau18}
C.~F. Gauss.
\newblock {Nachlass: Aritmetisch-geometrisches Mittel}.
\newblock In {\em {Werke 3 (Göttingem 1876)}}, page 357–402. Königliche
  Gesellschaft der Wissenschaften, 1818.

\bibitem{Gla11b}
D.~Głazowska.
\newblock {A solution of an open problem concerning {L}agrangian mean-type
  mappings}.
\newblock {\em Cent. Eur. J. Math.}, 9(5):1067–1073, 2011.

\bibitem{Gla11a}
D.~Głazowska.
\newblock {Some {C}auchy mean-type mappings for which the geometric mean is
  invariant}.
\newblock {\em J. Math. Anal. Appl.}, 375(2):418–430, 2011.

\bibitem{Har25a}
G.~H. Hardy.
\newblock {Note on a theorem of {H}ilbert concerning series of positive terms}.
\newblock {\em Proc. London Math. Soc.}, 23(2), 1925.

\bibitem{HarLitPol34}
G.~H. Hardy, J.~E. Littlewood, and G.~Pólya.
\newblock {\em {Inequalities}}.
\newblock Cambridge University Press, Cambridge, 1934.
\newblock (first edition), 1952 (second edition).

\bibitem{KalSze27}
T.~Kaluza and G.~Szegő.
\newblock {Über {R}eihen mit lauter positiven {G}liedern}.
\newblock {\em J. London Math. Soc.}, 2:266–272, 1927.

\bibitem{Ked94}
K.~S. Kedlaya.
\newblock {Proof of a mixed arithmetic-mean, geometric-mean inequality}.
\newblock {\em Amer. Math. Monthly}, 101(4):355–357, 1994.

\bibitem{Ked99}
K.~S. Kedlaya.
\newblock {Notes: {A} {W}eighted {M}ixed-{M}ean {I}nequality}.
\newblock {\em Amer. Math. Monthly}, 106(4):355–358, 1999.

\bibitem{Kno28}
K.~Knopp.
\newblock {Über {R}eihen mit positiven {G}liedern}.
\newblock {\em J. London Math. Soc.}, 3:205–211, 1928.

\bibitem{Kuc85}
M.~Kuczma.
\newblock {\em {An {I}ntroduction to the {T}heory of {F}unctional {E}quations
  and {I}nequalities}}, volume 489 of {\em {Prace Naukowe Uniwersytetu
  Śląskiego w Katowicach}}.
\newblock Państwowe Wydawnictwo Naukowe — Uniwersytet Śląski,
  Warszawa–Kraków–Katowice, 1985.
\newblock 2nd edn. (ed. by A. Gilányi), Birkhäuser, Basel, 2009.

\bibitem{KufMalPer07}
A.~Kufner, L.~Maligranda, and L.E. Persson.
\newblock {\em {The Hardy Inequality: About Its History and Some Related
  Results}}.
\newblock Vydavatelsk{\`y} servis, 2007.

\bibitem{KufPer00}
A.~Kufner and L.-E. Persson.
\newblock {\em {Integral {I}nequalities with {W}eights}}.
\newblock Matematický ústav AVČR, Prague, 1990.

\bibitem{Leh71}
D.~H. Lehmer.
\newblock {{O}n the compounding of certain means}.
\newblock {\em J. Math. Anal. Appl.}, 36:183–200, 1971.

\bibitem{Los70a}
L.~Losonczi.
\newblock {Über den {V}ergleich von {M}ittelwerten die mit
  {G}ewichtsfunktionen gebildet sind}.
\newblock {\em Publ. Math. Debrecen}, 17:203–208 (1971), 1970.

\bibitem{Los71a}
L.~Losonczi.
\newblock {Subadditive {M}ittelwerte}.
\newblock {\em Arch. Math. (Basel)}, 22:168–174, 1971.

\bibitem{Los71c}
L.~Losonczi.
\newblock {Subhomogene {M}ittelwerte}.
\newblock {\em Acta Math. Acad. Sci. Hungar.}, 22:187–195, 1971.

\bibitem{Los71b}
L.~Losonczi.
\newblock {Über eine neue {K}lasse von {M}ittelwerten}.
\newblock {\em Acta Sci. Math. (Szeged)}, 32:71–81, 1971.

\bibitem{Los73a}
L.~Losonczi.
\newblock {General inequalities for nonsymmetric means}.
\newblock {\em Aequationes Math.}, 9:221–235, 1973.

\bibitem{Los77}
L.~Losonczi.
\newblock {Inequalities for integral mean values}.
\newblock {\em J. Math. Anal. Appl.}, 61(3):586–606, 1977.

\bibitem{Mat99b}
J.~Matkowski.
\newblock {Iterations of mean-type mappings and invariant means}.
\newblock {\em Ann. Math. Sil.}, (13):211–226, 1999.
\newblock European Conference on Iteration Theory (Muszyna-Złockie, 1998).

\bibitem{Mat02b}
J.~Matkowski.
\newblock {On iteration semigroups of mean-type mappings and invariant means}.
\newblock {\em Aequationes Math.}, 64(3):297–303, 2002.

\bibitem{Mat05}
J.~Matkowski.
\newblock {Lagrangian mean-type mappings for which the arithmetic mean is
  invariant}.
\newblock {\em J. Math. Anal. Appl.}, 309(1):15–24, 2005.

\bibitem{Mat13}
J.~Matkowski.
\newblock {Iterations of the mean-type mappings and uniqueness of invariant
  means}.
\newblock {\em Annales Univ. Sci. Budapest., Sect. Comp.}, 41:145–158, 2013.

\bibitem{MatPal15}
J.~Matkowski and Zs. Páles.
\newblock {Characterization of generalized quasi-arithmetic means}.
\newblock {\em Acta Sci. Math. (Szeged)}, 81(3–4):447–456, 2015.

\bibitem{MitPecFin91}
D.~S. Mitrinović, J.~E. Pečarić, and A.~M. Fink.
\newblock {\em {Inequalities {I}nvolving {F}unctions and {T}heir {I}ntegrals
  and {D}erivatives}}, volume~53 of {\em {Mathematics and its Applications
  (East European Series)}}.
\newblock Kluwer Academic Publishers Group, Dordrecht, 1991.

\bibitem{Mul32}
P.~Mulholland.
\newblock {On the generalization of {H}ardy's inequality}.
\newblock {\em J. London Math. Soc.}, 7:208–214, 1932.

\bibitem{OpiKuf90}
B.~Opic and A.~Kufner.
\newblock {\em {Hardy-type {I}nequalities}}, volume 219 of {\em {Pitman
  Research Notes in Mathematics}}.
\newblock Longman Scientific \& Technical, Harlow, 1990.

\bibitem{Pas15c}
P.~Pasteczka.
\newblock {On negative results concerning {H}ardy means}.
\newblock {\em Acta Math. Hungar.}, 146(1):98–106, 2015.

\bibitem{Pas15a}
P.~Pasteczka.
\newblock {Scales of quasi-arithmetic means determined by an invariance
  property}.
\newblock {\em J. Difference Equ. Appl.}, 21(8):742–755, 2015.

\bibitem{PecSto01}
J.~E. Pečarić and K.~B. Stolarsky.
\newblock {Carleman's inequality: history and new generalizations}.
\newblock {\em Aequationes Math.}, 61(1–2):49–62, 2001.

\bibitem{Pal82b}
Zs. Páles.
\newblock {A generalization of the {M}inkowski inequality}.
\newblock {\em J. Math. Anal. Appl.}, 90(2):456–462, 1982.

\bibitem{Pal82a}
Zs. Páles.
\newblock {Characterization of quasideviation means}.
\newblock {\em Acta Math. Acad. Sci. Hungar.}, 40(3-4):243–260, 1982.

\bibitem{Pal83a}
Zs. Páles.
\newblock {Inequalities for homogeneous means depending on two parameters}.
\newblock In E.~F. Beckenbach and W.~Walter, editors, {\em {General
  Inequalities, 3 (Oberwolfach, 1981)}}, volume~64 of {\em {International
  Series of Numerical Mathematics}}, page 107–122. Birkhäuser, Basel, 1983.

\bibitem{Pal83b}
Zs. Páles.
\newblock {On complementary inequalities}.
\newblock {\em Publ. Math. Debrecen}, 30(1-2):75–88, 1983.

\bibitem{Pal83c}
Zs. Páles.
\newblock {On {H}ölder-type inequalities}.
\newblock {\em J. Math. Anal. Appl.}, 95(2):457–466, 1983.

\bibitem{Pal84a}
Zs. Páles.
\newblock {Inequalities for comparison of means}.
\newblock In W.~Walter, editor, {\em {General Inequalities, 4 (Oberwolfach,
  1983)}}, volume~71 of {\em {International Series of Numerical Mathematics}},
  page 59–73. Birkhäuser, Basel, 1984.

\bibitem{Pal85a}
Zs. Páles.
\newblock {Ingham {J}essen's inequality for deviation means}.
\newblock {\em Acta Sci. Math. (Szeged)}, 49(1-4):131–142, 1985.

\bibitem{Pal87d}
Zs. Páles.
\newblock {On the characterization of quasi-arithmetic means with weight
  function}.
\newblock {\em Aequationes Math.}, 32(2-3):171–194, 1987.

\bibitem{Pal88a}
Zs. Páles.
\newblock {General inequalities for quasideviation means}.
\newblock {\em Aequationes Math.}, 36(1):32–56, 1988.

\bibitem{Pal88d}
Zs. Páles.
\newblock {On a {P}exider-type functional equation for quasideviation means}.
\newblock {\em Acta Math. Hungar.}, 51(1-2):205–224, 1988.

\bibitem{Pal88e}
Zs. Páles.
\newblock {On homogeneous quasideviation means}.
\newblock {\em Aequationes Math.}, 36(2-3):132–152, 1988.

\bibitem{Pal00a}
Zs. Páles.
\newblock {Nonconvex functions and separation by power means}.
\newblock {\em Math. Inequal. Appl.}, 3(2):169–176, 2000.

\bibitem{PalPer04}
Zs. Páles and L.-E. Persson.
\newblock {Hardy type inequalities for means}.
\newblock {\em Bull. Austr. Math. Soc.}, 70(3):521–528, 2004.

\bibitem{Sch82}
I.~J. Schoenberg.
\newblock {\em {Mathematical time exposures}}.
\newblock Mathematical Association of America, Washington, DC, 1982.

\bibitem{ToaToa05}
G.~Toader and S.~Toader.
\newblock {\em {Greek means and the arithmetic-geometric mean}}.
\newblock {RGMIA Monographs}. Victoria University, 2005.

\end{thebibliography}
%\bibliographystyle{plain}

\def\cprime{$'$}

\end{document}